\chardef\bslash=`\\
\newtheorem{theorem}[subsection]{Theorem}
\newtheorem{corollary}[subsection]{Corollary}
\newtheorem{thm}[subsection]{Theorem}
\newtheorem{lemma}[subsection]{Lemma}
\newtheorem{prop}[subsection]{Proposition}
\newtheorem{proposition}[subsection]{Proposition}
\theoremstyle{remark}
\numberwithin{equation}{section}
\newif\iffinalrun
  \newcommand{\need}[1]{}
  \newcommand{\mar}[1]{}
  \newcommand{\need}[1]{{\tiny *** #1}}
  \newcommand{\mar}[1]{\marginpar{\raggedright\tiny  #1}}\fi
\renewcommand\mathbb{\mathbf}
\newcommand{\rec}{{\operatorname{rec}}}
\newcommand{\wv}{{\widetilde{v}}}
\renewcommand{\ell}{l}
\newcommand{\diag}{\operatorname{diag}}
\newcommand{\tr}{\operatorname{tr}}
\newcommand{\A}{\mathbf{A}}
\newcommand{\bA}{\ensuremath{\mathbf{A}}}
\newcommand{\bC}{\ensuremath{\mathbf{C}}}
\newcommand{\bF}{\ensuremath{\mathbf{F}}}
\newcommand{\bQ}{\ensuremath{\mathbf{Q}}}
\newcommand{\Q}{\QQ}
\newcommand{\QQ}{{\mathbb Q}}
\newcommand{\bR}{\ensuremath{\mathbf{R}}}
\newcommand{\bT}{\ensuremath{\mathbf{T}}}
\newcommand{\T}{{\mathbb T}}
\newcommand{\bZ}{\ensuremath{\mathbf{Z}}}
\newcommand{\bbZ}{\ensuremath{\mathbf{Z}}}
\newcommand{\bbQ}{\ensuremath{\mathbf{Q}}}
\newcommand{\cA}{{\mathcal A}}
\newcommand{\cE}{{\mathcal E}}
\newcommand{\cH}{{\mathcal H}}
\newcommand{\cL}{{\mathcal L}}
\newcommand{\cO}{{\mathcal O}}
\newcommand{\cT}{{\mathcal T}}
\newcommand{\ffrm}{{\mathfrak m}}
\DeclareMathOperator{\Aut}{Aut}
\DeclareMathOperator{\End}{End}
\DeclareMathOperator{\Gal}{Gal}
\newcommand{\GL}{\mathrm{GL}}
\DeclareMathOperator{\Hom}{Hom}
\DeclareMathOperator{\nInd}{n-Ind}
\DeclareMathOperator{\SL}{SL}
\DeclareMathOperator{\St}{St}
\newcommand{\Frob}{\mathrm{Frob}}
\newcommand{\Art}{{\operatorname{Art}}}
\newcommand{\Res}{\operatorname{Res}}
\newcommand{\doubleslash}{/\kern-0.2em{/}}
\begin{document}
	
\author{Christos Anastassiades and Jack A. Thorne}
\title[Raising the level for $\GL_{2n}$]{Raising the level of automorphic representations of $\GL_{2n}$ of unitary type}
\begin{abstract}We use the endoscopic classification of automorphic representations of even-dimensional unitary groups to construct level-raising congruences. 
\end{abstract}
\maketitle
\setcounter{tocdepth}{1}
\tableofcontents
\section{Introduction}

\textbf{Context.} The problem of ``level-raising'' was first considered by Ribet \cite{Rib84} in the context of classical modular forms. Let $f$ be a classical modular form of weight 2 and level $\Gamma_0(N)$, which is an eigenform for the Hecke operators. For any choice of rational prime $l$ and isomorphism $\iota : \overline{\bQ}_l \to \bC$, there is an associated continuous semisimple representation
\[ r_\iota(f) : \Gal(\overline{\bQ} / \bQ) \to \GL_2(\overline{\bQ}_l) \]
unramified outside $N l$ and determined, up to isomorphism, by the relation
\[ \det(X - r_\iota(f)(\Frob_p)) = X^2 - \iota^{-1}(a_p) X + p \]
for each prime $p \nmid Nl$; here $\Frob_p$ denotes a geometric Frobenius element and $a_p \in \bC$ the eigenvalue of the Hecke operator $T_p$. 

Let $\overline{r}_\iota(f) : \Gal(\overline{\bQ} / \bQ) \to \GL_2(\overline{\bF}_l)$ denote the semisimple residual representation. Given a prime $q \nmid Nl$, one can ask whether there exists another eigenform $g$ of weight 2 and level $\Gamma_0(Nq)$, new at $q$, and such that $\overline{r}_\iota(g) \cong \overline{r}_\iota(f)$. Ribet showed that the answer is yes, provided that $f$ satisfies certain conditions. A necessary condition is the congruence
\begin{equation}\label{eqn_level_raising_congruence}  \iota^{-1}(a_q) \equiv \pm (q+1) \text{ mod }\ffrm_{\overline{\bZ}_l}. 
\end{equation}
This condition is necessary because there is an isomorphism, a consequence of local-global compatibility,
\[ r_\iota(g)|_{\Gal(\overline{\bQ}_q / \bQ_q)} \cong \left( \begin{array}{cc} \psi & \ast \\ 0 & \epsilon^{-1} \psi \end{array} \right), \]
where $\epsilon$ is the cyclotomic character and $\psi$ is an unramified character such that $\psi^2 = 1$. 

One of the key inputs in \cite{Rib84} is a lemma of Ihara, which describes the kernel of a degeneracy map
\[ H^1(\Gamma_0(N), \bF_l) \oplus H^1(\Gamma_0(N), \bF_l) \to H^1(\Gamma_0(Nq), \bF_l). \]
Diamond and Taylor \cite{Dia94} generalized \cite{Rib84} by considering automorphic forms on quaternion algebras (i.e. inner forms of $\GL_2$ over $\bQ$). In particular, they considered the case of definite quaternion algebras, in which case the analogue of Ihara's lemma can be deduced easily from the strong approximation theorem (see \cite[Lemma 2]{Dia94}). 

Level-raising for automorphic representations of $\GL_n$ when $n > 2$ is less well understood. A natural case to consider is that of regular algebraic automorphic representations of unitary type. By unitary type, we mean automorphic representations $\pi$ of $\GL_n(\bA_E)$, where $E$ is a CM number field, such that the conjugate $\pi^c$ by complex conjugation $c \in \Aut(E)$ is isomorphic to the contragredient $\pi^\vee$. These are the automorphic representations which should be related, by Langlands functoriality, to automorphic representations of unitary groups admitting Shimura varieties (permitting a generalisation of Ribet's technique) or to definite unitary groups (generalising the context of \cite{Dia94}). 

In \cite{cht}, Clozel, Harris and Taylor formulated a conjecture about automorphic forms on definite unitary groups $U_n$ that would play the role, in studying level-raising at a split place  of the quadratic extension defining $U_n$, of Ihara's lemma in the work of Ribet. However, relatively few cases of this conjecture have been proved. We mention the case $n = 2$ (proved in \cite[Lemma 5.3.1]{cht}) and the papers \cite{Tho14, Kar19}, which establish some cases of the conjecture for arbitrary $n$ but under restrictive hypotheses. (One can also study the analogous problem at  places which are not split; see for example the papers \cite{Clo00, Bel06, Sor06}.)

The motivation in \cite{cht} for formulating this conjectural generalization of Ihara's lemma was to be able to prove automorphy lifting theorems. In a subsequent paper \cite{tay}, Taylor gave a method to avoid the use of this conjecture, establishing unconditional automorphy lifting theorems. One can turn the argument around and apply these automorphy lifting results to raise the level of an automorphic representation $\pi$ (always regular algebraic, of unitary type), provided the residual representation $\overline{r}_\iota(\pi)$ is sufficiently non-degenerate (in particular, one usually requires it to be irreducible). See \cite[Theorem 5.1.5]{Gee11} for an example of a result of this type.

\textbf{The results of this paper.} In this paper, we prove new level-raising results for regular algebraic automorphic representations of $\GL_n(\bA_E)$ of unitary type, with only a very weak condition on $\overline{r}_\iota(\pi)$. Our motivation for doing this is applications to the problem of symmetric power fucntoriality for $\GL_2$ (see e.g.\ \cite{New19b}). Here is a special case of our main result, Theorem \ref{thm_level_raising_for_general_linear_groups_two_factor_case}.
\begin{theorem}\label{intro_main_theorem}
Let $E$ be a CM number field, and let $n \geq 1$ be an integer. Let $\pi_1, \pi_2$ be cuspidal automorphic representations of $\GL_n(\bA_E)$ of unitary type such that $\pi = \pi_1 \boxplus \pi_2$ is regular algebraic. Let $l$ be a prime, and let $\iota : \overline{\bQ}_l \to \bC$ be an isomorphism. Suppose that the following conditions are satisfied:
\begin{enumerate}
\item $\overline{r}_\iota(\pi)$ is not isomorphic to any character twist of the representation $1 \oplus \epsilon^{-1} \oplus \dots \oplus \epsilon^{1-2n}$.
\item There exists a place $w \nmid l$ of $E$ and unramified characters $\xi_1, \xi_2 : E_w^\times \to \bC^\times$ satisfying the following:
\begin{enumerate}
\item $\pi_{1, w} \cong \St_n(\xi_1)$ and $\pi_{2, w} \cong \St_n(\xi_2)$, where $\St_n$ denotes the Steinberg representation of $\GL_n(E_w)$.
\item Let $\varpi_w \in E_w$ be a uniformizer. Then there is a congruence 
\[ \iota^{-1} \xi_1(\varpi_w) / \xi_2(\varpi_w) \equiv q_w^{n} \text{ mod }\ffrm_{\overline{\bZ}_l}, \]
\end{enumerate}
where $\ffrm_{\overline{\bZ}_l}$ denotes the maximal ideal of $\overline{\bZ}_l$.
\end{enumerate}
Then we can find the following:
\begin{enumerate}
\item A biquadratic CM extension $E' / E$, and a place $w' | w$ of $E'$.
\item A cuspidal automorphic representation $\Pi$ of $\GL_{2n}(\bA_{E'})$, regular algebraic of unitary type, such that $\overline{r}_\iota(\Pi) \cong \overline{r}_\iota(\pi)|_{\Gal(\overline{E} / E')}$ and $\Pi_{w'}$ is an unramified twist of the Steinberg representation of $\GL_{2n}(E'_{w'})$.
\end{enumerate}
\end{theorem}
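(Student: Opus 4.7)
The strategy, which accords with the abstract, is to use the endoscopic classification of automorphic representations of even-dimensional unitary groups (Mok) to translate the problem into a level-raising question on a definite inner form of $U_{2n}$, solve it there using the congruence hypothesis (2b), and transfer back to $\GL_{2n}$. The isobaric sum $\pi = \pi_1 \boxplus \pi_2$ corresponds to an endoscopic (non-stable) global $A$-parameter of $U_{2n}$ arising from the elliptic endoscopic embedding $U_n \times U_n \hookrightarrow U_{2n}$; the representation $\Pi$ sought in the theorem should, dually, arise from a \emph{stable} $A$-parameter whose Hecke data agrees modulo $\ffrm_{\overline{\bZ}_l}$ with that of $\pi$.

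To set this up, I would first choose a biquadratic CM extension $E'/E$ satisfying the usual auxiliary hypotheses: $w$ splits in $E'/E$, so that the Steinberg local structure is preserved; $E'$ is linearly disjoint from the fixed field of $\ker \overline{r}_\iota(\pi)$, so that the residual representation and the non-Eisenstein condition (1) are preserved; and the auxiliary sign characters needed to realise the endoscopic transfer $U_n \times U_n \to U_{2n}$ globally can be arranged with controlled ramification, in particular unramified at places above $w$. After base change and endoscopic descent via Mok's classification, $\pi_{1,E'} \boxplus \pi_{2,E'}$ gives rise to an automorphic representation $\sigma$ of a definite inner form of $U_{2n}$ over $(E')^+$; at the split place $v \mid w$ of $(E')^+$, where $U_{2n}((E')^+_v) \cong \GL_{2n}(E'_{w'})$, the local component $\sigma_v$ is an irreducible constituent of the parabolic induction $\St_n(\xi_1) \times \St_n(\xi_2)$.

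The core observation is that hypothesis (2b), $\iota^{-1}\xi_1(\varpi_w)/\xi_2(\varpi_w) \equiv q_w^n \pmod{\ffrm_{\overline{\bZ}_l}}$, is exactly the condition under which the Satake parameters of $\St_n(\xi_1) \times \St_n(\xi_2)$ agree modulo $l$ with those of some unramified twist of the full Steinberg $\St_{2n}$ on $\GL_{2n}(E'_{w'})$. Consequently, inside the space of automorphic forms on the definite $U_{2n}/(E')^+$ localised at the maximal ideal of the Hecke algebra attached to $\overline{r}_\iota(\pi)|_{\Gal(\overline{E}/E')}$, the eigensystem of $\sigma$ appears modulo $\ffrm_{\overline{\bZ}_l}$ in the Steinberg-level subspace at $v$. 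Lifting this eigensystem to characteristic zero produces an automorphic representation $\sigma'$ of $U_{2n}$ whose component at $v$ is an unramified twist of Steinberg and whose Hecke eigensystem agrees with that of $\sigma$ modulo $\ffrm_{\overline{\bZ}_l}$; transferring $\sigma'$ back via Mok's classification then yields the cuspidal $\Pi$ with the required properties, provided the $A$-parameter of $\sigma'$ is \emph{stable}.

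The main obstacle will be exactly this last point: ruling out the possibility that the lifted eigensystem corresponds to yet another endoscopic parameter rather than a stable one. A priori, the Steinberg-level spectrum could receive contributions from various endoscopic $A$-parameters (decompositions of the form $\pi_1' \boxplus \pi_2'$, or more degenerate sums), and these would not yield a cuspidal $\Pi$ after transfer. Hypothesis (1), which forbids $\overline{r}_\iota(\pi)$ from being isomorphic to any twist of $1 \oplus \epsilon^{-1} \oplus \cdots \oplus \epsilon^{1-2n}$, is precisely the non-Eisenstein-type condition designed to rule out the worst of these possibilities, playing the role that irreducibility of the residual representation plays in \cite{Rib84}. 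The bulk of the technical work therefore lies in enumerating the endoscopic global parameters of $U_{2n}$ that could contribute modulo $\ffrm_{\overline{\bZ}_l}$ at Steinberg level and in showing that hypothesis (1), together with the local Steinberg constraint at $v$, suffices to eliminate all of them.
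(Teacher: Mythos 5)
Your high-level outline (descend to a definite unitary group, raise the level there, transfer back) does match the paper's strategy, but the proposal has a genuine gap at its core: the step ``lifting this eigensystem to characteristic zero produces an automorphic representation $\sigma'$ whose component at $v$ is an unramified twist of Steinberg'' is asserted, not proved, and it is precisely the hard point. Congruence of Hecke eigenvalues mod $\ffrm_{\overline{\bZ}_l}$ at Iwahori level does not by itself produce a characteristic-zero lift with square-integrable local component: the lift could a priori again be Iwahori-spherical principal series coming from spherical level. Ruling this out is an Ihara-lemma/degeneracy-map problem, and with your choice of local form at $v$ --- you take $U_{2n}((E')^+_v)\cong \GL_{2n}(E'_{w'})$, i.e.\ the group split at $v$ --- the statement you need is essentially the Clozel--Harris--Taylor Ihara conjecture for $\GL_{2n}$, which is open; hypothesis (1) is nowhere near strong enough to supply it. The paper's key device, absent from your sketch, is to choose the inner form so that $U_{2n}(F_v)\cong\GL_2(D_w)$ with $D_w$ a central division algebra of rank $n$: then the Iwahori-spherical representation theory is $\GL_2$-like (Proposition \ref{prop_classification_of_Iwahori_spherical_repns}, Lemma \ref{lem_characterizing_Steinberg}), the analogue of Ihara's lemma follows from strong approximation for the derived group together with the non-Eisenstein condition (1) (Lemmas \ref{lem_kernel_of_d}, \ref{lem_invariant_implies_non-generic}), and the congruence (2b) enters through the determinant of $d^\vee_{\ffrm}\circ d_{\ffrm}$, which equals $(q_{v_0}^n+1)^2-(T^1_{v_0})^2(T^2_{v_0})^{-1}$ and so vanishes mod $l$ (Lemma \ref{lem_ramified_Hecke_operators}, Proposition \ref{prop_iharas_lemma}); non-surjectivity of the degeneracy map then forces a Steinberg-of-$\GL_2(D)$ component.

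Two secondary points are also off. First, the passage to $E'$ is not primarily about preserving the residual representation or splitting $w$; its real purpose is Arthur's multiplicity formula: after a quadratic base change the relevant product of endoscopic signs becomes a square, hence $1$, so the descent $\sigma$ actually occurs in the discrete spectrum of the definite $U_{2n}$ (Theorem \ref{thm_unitary_group_descent}); the second quadratic layer arranges the splitting/unramifiedness hypotheses of Theorem \ref{thm_level_raising_for_general_linear_groups_two_factor_case}. Second, your ``main obstacle'' --- enumerating endoscopic parameters contributing mod $l$ at Steinberg level and eliminating them via hypothesis (1) --- is not how cuspidality of $\Pi$ is (or could be) established: condition (1) cannot prevent residual reducibility (indeed $\overline{r}_\iota(\pi)$ is already a sum of two $n$-dimensional pieces). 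In the paper the exact, characteristic-zero local condition at $w'$ does this work: $|LJ_{\GL_2(D_{w'})}|(\Pi_{w'})$ is a Steinberg twist, so by Lemma \ref{lem_computation_of_LJ}(3) $\Pi_{w'}$ is either a Steinberg twist of $\GL_{2n}$ or one-dimensional; an isobaric sum $\pi_1'\boxplus\pi_2'$ is thereby excluded locally, and hypothesis (1) is used only to rule out the one-dimensional case. Finally, note that Mok's classification applies to quasi-split unitary groups; for the non-pure inner form used here the full classification (Kaletha--Minguez--Shin--White) is not yet completely available, which is why the paper proves the base change and descent statements it needs directly from Labesse's stabilized (twisted) trace formula rather than citing the classification wholesale.
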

The proof of this theorem is based on the endoscopic classification of automorphic representations of $U_{2n}$. More precisely, we choose a unitary group $U_{2n}$ over the maximal totally real subfield $F$ of $E$, an inner form of the quasi-split unitary group $U_{2n}^\ast$ associated to the quadratic extension $E/F$, which has the following properties:
\begin{itemize}
\item $U_{2n}$ is definite: in other words, $U_{2n}(F \otimes_\bQ \bR)$ is compact.
\item Let $v$ denote the place of $F$ below $w$, which we assume splits in $E$. Then there is an isomorphism $U_{2n}(F_v) \cong \GL_2(D_w)$, where $D_w$ is a central division algebra over $E_w$ of rank $n$.
\end{itemize}
This group is useful to us for the following reasons. First, the analogue of Ihara's lemma at the place $v$ follows from strong approximation, just as in the case $n = 1$. Second, the endoscopic classification implies that the automorphic representations of $U_{2n}(\bA_F)$ should admit a description in terms of the regular algebraic automorphic representations of $\GL_{2n}(\bA_E)$ of unitary type. In particular, the multiplicity of (a descent of) $\pi$ in the space of automorphic forms on $U_{2n}$ should be described by Arthur's multiplicity formula, allowing us to reformulate the problem of level-raising for $\GL_{2n}$ in terms of a more accessible one about level-raising for the group $U_{2n}$.

We can now explain the reason we must pass to an extension $E' / E$ in the statement of the Theorem \ref{intro_main_theorem}: it is there so that the multiplicity on $U_{2n}$ is positive! At the time of writing a proof of the endoscopic classification for groups like $U_{2n}$ (which are inner twists, but not pure inner twists, of the quasi-split form) has been announced by Kaletha, Minguez, Shin, and White \cite{Kal14}, but a complete proof has not yet appeared. We therefore establish the small piece of the endoscopic classification that we need using existing references.

\textbf{Organization of this paper.}  We begin in \S \ref{sec_jacquet_langlands} by recalling some basic facts about the relation between representations of $\GL_{2n}(E_w)$ and $\GL_2(D_w)$, and consider as well the local avatar of the level-raising degeneracy operator. In \S \ref{sec_endoscopic_classification}, we state and prove what we require on the endoscopic classification in the cases described above. Our main reference for the trace formula and its stabilization is the paper of Labesse \cite{labesse}. To compute, we rely on Kaletha's formulation of the local Langlands conjectures, as exposed in \cite{Kal16c}.

In \S \ref{sec_congruences_unitary_case}, we prove a version of our main theorem for automorphic representations of the group $U_{2n}$; this requires the analogue of Ihara's lemma, but not yet the endoscopic classification. In \S \ref{sec_congruences_general_linear_case}, we combine the results of \S \S\ref{sec_endoscopic_classification}--\ref{sec_congruences_unitary_case} to prove our main result on automorphic representations of $\GL_{2n}$. 

\textbf{Acknowledgements.} This work is based on the PhD thesis of the first author (C.A.). We thank Laurent Clozel and Tasho Kaletha for useful conversations relating to the arguments in \S \ref{sec_endoscopic_classification}. We thank the anonymous referee for their useful comments.

J.T.'s work received funding from the European Research Council (ERC) under the European Union's Horizon 2020 research and innovation programme (grant agreement No 714405). 

\textbf{Notation.} Throughout this paper, if $K$ is a field, then we write $G_K$ for the Galois group with respect to a fixed separable closure $K^s / K$. If $K$ is a number field and $v$ is a place of $K$ then we write $K_v$ for the completion, and (in case $v$ is a finite place) $\cO_{K_v} \subset K_v$ for the valuation ring, $\varpi_v \in \cO_{K_v}$ for a choice of uniformizer, $k(v) = \cO_{K_v} / (\varpi_v)$ for the residue field, and $q_v = \# k(v)$ for the size of the residue field. We fix embeddings $K^s \to K_v^s$ extending $K \to K_v$, obtaining injections $G_{K_v} \to G_K$. We write $\bA_K$ for the adele ring of $K$, $\bA_K^\infty$ for the ring of finite adeles, and $\widehat{\cO}_K \subset \bA_K^\infty$ for the open subring given by the profinite completion of the ring of integers $\cO_K \subset K$.

We use the notation for the local Langlands correspondence for $\GL_n$ described in \cite[\S 1]{Clo14}. In particular, if $K$ is a non-archimedean local field and $\pi$ is an irreducible admissible representation of $\GL_n(K)$ over a field $\Omega$, abstractly isomorphic to $\bC$, then we use the Tate normalisation to define $\rec^T_K(\pi)$, a Frobenius-semisimple Weil--Deligne representation over $K$. The same reference also recalls the notion of regular algebraic automorphic representation (a condition on the infinite component of an automorphic representation $\pi$ of $\GL_n(\bA_K)$). 

If $E$ is a CM field (i.e.\ a totally imaginary quadratic extension of a totally real number field $F$), then we write $c \in \Gal(E/F)$ for the non-trivial automorphism. We say that an automorphic representation $\pi$ of $\GL_n(\bA_E)$ is conjugate self-dual if $\pi^c \cong \pi^\vee$. We will often use the fact that if $\pi$ is a RACSDC (regular algebraic, conjugate self-dual, cuspidal) automorphic representation of $\GL_n(\bA_E)$, $l$ is a prime, and $\iota : \overline{\bQ}_l \to \bC$ is an isomorphism, then there exists an associated continuous representation $r_\iota(\pi) : G_E \to \GL_n(\overline{\bQ}_l)$, which satisfies the relation
\begin{equation}\label{eqn_lgc}
\operatorname{WD}(r_\iota(\pi)|_{G_{E_v}})^{F-ss} \cong \rec^T_{E_v}(\pi_v)
\end{equation}
for each place $v\nmid l$ of $E$ (see \cite{Caraianilnotp}; in other words, the associated Weil--Deligne representation respects the local Langlands correspondence, after passage to Frobenius-semisimplification). 

This can be slightly extended: if instead $\pi_1, \dots, \pi_k$ are conjugate self-dual, cuspidal, automorphic representations of $\GL_{n_1}(\bA_E), \dots, \GL_{n_k}(\bA_E)$ (therefore unitary), and $\pi = \pi_1 \boxplus \dots \boxplus \pi_k$ is the normalized induction, a conjugate self-dual, unitary automorphic representation of $\GL_{\sum n_i}(\bA_E)$, then if $\pi$ is regular algebraic then there exists a continuous semisimple representation $r_\iota(\pi) : G_E \to \GL_{\sum n_i}(\overline{\bQ}_p)$ satisfying (\ref{eqn_lgc}). This follows easily from the case $k = 1$. We note that for $\pi$ satisfying these conditions, we can define what it means for $\pi$ to be $\iota$-ordinary (\cite[Definition 2.4]{Clo14}).

We will use the existence of cyclic base change. If $E$ is a number field and $\pi_1, \dots, \pi_k$ are unitary cuspidal representations of $\GL_{n_1}(\A_E), \dots, \GL_{n_k}(\A_E)$, then cyclic base change associates to the representation $\pi = \pi_1 \boxplus \dots \boxplus \pi_k$ of $\GL_n(\A_E)$ and any cyclic extension $E' / E$ of number fields the base change representation $\pi_{E'}$ of $\GL_n(\A_{E'})$ (see \cite[Ch. 3, Theorem 4.2, Theorem 5.1]{MR1007299}.

Throughout this paper we use overline to denote semisimple residual representations (e.g.\ $\overline{r}_\iota(\pi) : G_E \to \GL_n(\overline{\bF}_l)$).

\section{Recollections on the Jacquet--Langlands correspondence}\label{sec_jacquet_langlands}

Let $K / \bQ_p$ be a finite extension, and let $D$ be a central division algebra over $K$ of rank $n \geq 1$. We recall that the Jacquet--Langlands correspondence is a bijection $JL$ from the set of (isomorphism classes) of square-integrable irreducible admissible representations of $\GL_{d}(D)$ to the set of square-integrable irreducible admissible representations of $\GL_{dn}(K)$ (see \cite{Del84}). It is uniquely characterized by a character identity, which is generalized below. Badulescu  has defined a map $|LJ_{\GL_d(D)}|$ from the set of irreducible unitary representations of $\GL_{dn}(K)$ to the set of irreducible unitary representations of $\GL_d(D)$, together with the zero representation (see \cite{Bad08}). The map $|LJ_{\GL_d(D)}|$ is neither injective nor surjective in general, but it does satisfy $|LJ_{\GL_d(D)}|(JL(\pi)) = \pi$ if $\pi$ is square-integrable. If $\pi^\ast$ is any irreducible unitary representation of $\GL_{dn}(K)$, and $f \in C_c^\infty(\GL_d(D))$, then we have the identity (cf. \cite[Proposition A.0.1]{Her19}):
\begin{equation}\label{eqn_LJ_character_identity}
\tr \pi^\ast(f^\ast) = (-1)^{d(n-1)} \tr |LJ_{\GL_d(D)}|(\pi^\ast)(f),
\end{equation}
where $f^\ast \in C_c^\infty(\GL_{dn}(K))$ is a stable transfer of $f$ (see \S \ref{sec_endoscopic_classification} for more recollections on this). 

The reduced norm defines a homomorphism $\det : \GL_d(D) \to K^\times$. If $d = 1$, then we denote this as $N : D^\times \to K^\times$.  Both $JL$ and  $|LJ_{\GL_d(D)}|$ are compatible with the operation of twisting by characters of the form $\chi \circ \det$.

We now specialise to the case $d = 2$, which is the one of interest for us. Let $P_0 \subset \GL_2(D)$ denote the minimal parabolic subgroup of $\GL_2(D)$ consisting of upper-triangular matrices with entries in $D$, and let $M_0 = D^\times \times D^\times$ denote its diagonal Levi subgroup. Let $\delta_{P_0} : P_0 \to \bR_{>0}$ denote the character $\delta_{P_0}(\diag(d_1, d_2)) = | N(d_1 d_2^{-1}) |^n$, where $| \cdot | : K^\times \to \bR_{>0}$ is normalized, as usual, so that the norm of a uniformizer equals the reciprocal cardinality of the residue field of $K$.

Let $\cO_D$ denote the ring of integers of $D$, and let $\varpi_D \in \cO_D$ be a fixed choice of uniformizer. We set $\mathfrak{K} = \GL_2(\cO_D)$, and write $\mathfrak{I} \subset \mathfrak{K}$ for the subset of elements whose lower-left entry is divisible by $\varpi_D$. Then $\mathfrak{I}$ is an Iwahori subgroup of $\GL_2(D)$.

The representations of $\GL_2(D)$ that we will be concerned with are the irreducible subquotients of the normalized induction
\[ \nInd_{P_0}^{\GL_2(D)} (\chi_1 \circ N \otimes \chi_2 \circ N), \]
 where $\chi_1, \chi_2 : K^\times \to \bC^\times$ are unramified characters. The properties of these are described by the following proposition, which generalizes well-known facts in the case $D = K$.
\begin{proposition}\label{prop_classification_of_Iwahori_spherical_repns}
\begin{enumerate}
\item Let $\pi$ be an irreducible admissible representation of $\GL_2(D)$. Then $\pi^{\mathfrak{I}} \neq 0$ if and only if $\pi$ is isomorphic to a subquotient of a representation $\nInd_{P_0}^{\GL_2(D)} \chi_1 \circ N \otimes \chi_2 \circ N$, where $\chi_1, \chi_2 : K^\times \to \bC^\times$ are unramified characters.
\item Let $\chi_1, \chi_2 : K^\times \to \bC^\times$ be unramified characters, and let $\pi = \nInd_{P_0}^{\GL_2(D)} \chi_1 \circ N \otimes \chi_2 \circ N$. Then $\pi$ is irreducible if and only if $\chi_1 / \chi_2 \neq | \cdot |^{\pm n}$. If $\chi_1  / \chi_2= | \cdot |^{\pm n}$, then $\pi$ has two irreducible subquotients: the character $\chi_1 \circ \det$, and an essentially square-integrable representation.
\end{enumerate}
\end{proposition}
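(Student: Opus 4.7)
The plan is to prove (1) via Casselman's theorem and Frobenius reciprocity, and (2) by combining a Jacquet-module length bound with the Jacquet--Langlands character identity (\ref{eqn_LJ_character_identity}).

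For part (1), suppose first that $\pi^{\mathfrak{I}} \neq 0$. By Casselman's theorem the normalized Jacquet module $r_{M_0}^G \pi$ is nonzero and has nonzero $(\mathfrak{I} \cap M_0)$-fixed vectors. Since $M_0 = D^\times \times D^\times$ is abelian and $\mathfrak{I} \cap M_0 = \cO_D^\times \times \cO_D^\times$, some irreducible subquotient of $r_{M_0}^G \pi$ is an unramified character of $M_0$. The key observation is that every unramified character of $D^\times$ factors as $\chi \circ N$ for an unramified character $\chi$ of $K^\times$: both $D^\times/\cO_D^\times$ and $K^\times/\cO_K^\times$ are infinite cyclic and $v_K(N(\varpi_D)) = 1$, so the pullback is a bijection on character groups. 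Frobenius reciprocity then places $\pi$ as a subquotient of the corresponding induced representation. Conversely, computing $(\nInd_{P_0}^G(\chi_1 \circ N \otimes \chi_2 \circ N))^{\mathfrak{I}}$ via the Iwasawa decomposition $G = P_0 \mathfrak{K}$ together with the (double-coset) decomposition $\mathfrak{K} = \mathfrak{I} \sqcup \mathfrak{I} w \mathfrak{I}$ shows that the Iwahori-fixed space is two-dimensional; in the reducible case each Jordan--Hölder factor must contribute a nonzero Iwahori-fixed vector.

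For the length bound and the character factor in (2), I would use the geometric lemma: the normalized Jacquet module of $\Pi = \nInd_{P_0}^G(\chi_1 \circ N \otimes \chi_2 \circ N)$ is a length-two admissible $M_0$-representation, with graded pieces $\chi_1 \circ N \otimes \chi_2 \circ N$ and $\chi_2 \circ N \otimes \chi_1 \circ N$. Exactness of the Jacquet functor then bounds the length of $\Pi$ by two. A direct Frobenius reciprocity computation---keeping track of $\delta_{P_0}^{1/2}(\diag(d_1,d_2)) = |N(d_1/d_2)|^{n/2}$---shows that $\Pi$ has a one-dimensional sub- or quotient representation precisely when $\chi_1/\chi_2 = |\cdot|^{\pm n}$. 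This produces the asserted reducibility, with the character Jordan--Hölder factor of the form $\chi_1 \circ \det$ (up to the $|\cdot|^{n/2}$-twist forced by the normalization convention).

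For the complementary irreducibility assertion, and to identify the other Jordan--Hölder factor as essentially square-integrable, I would invoke the Jacquet--Langlands correspondence. The character identity (\ref{eqn_LJ_character_identity}), together with the fact that $JL(\chi \circ N) = \chi \St_n$ and compatibility of character transfer with parabolic induction, relates the Jordan--Hölder data of $\Pi$ to those of $\nInd^{\GL_{2n}(K)}_{\GL_n(K) \times \GL_n(K)}(\chi_1 \St_n \otimes \chi_2 \St_n)$. By Bernstein--Zelevinsky this $\GL_{2n}(K)$-induction is irreducible iff the segments $\chi_i[-(n-1)/2, (n-1)/2]$ are not linked, i.e.\ iff $\chi_1/\chi_2 \neq |\cdot|^{\pm n}$; when reducible, its non-character Jordan--Hölder factor is a twisted Steinberg $\chi \St_{2n}$, essentially square-integrable, whose image under the inverse Jacquet--Langlands correspondence gives the essentially square-integrable Jordan--Hölder factor of $\Pi$. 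The main obstacles are: (i) the bookkeeping of $\delta_{P_0}^{1/2}$ in the Frobenius reciprocity calculation, needed to pin down the precise character factor; and (ii) justifying that the Jacquet--Langlands character transfer preserves Jordan--Hölder length in this situation, given that the inducing data on the $\GL_2(D)$-side are characters rather than square-integrable representations, so one must work at the level of virtual characters via (\ref{eqn_LJ_character_identity}).
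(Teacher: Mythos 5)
The paper's proof of this proposition consists entirely of citations: part (1) is attributed to \cite[\S 2.1]{Her19} and part (2) to \cite[\S 2]{Tad90}. Your part (1) follows essentially the same path as the cited source (Casselman's characterization of Iwahori-fixed vectors via the Jacquet module, plus the observation that every unramified character of $D^\times$ factors through $N$), so there is no methodological difference there, only more detail. Your part (2), by contrast, is a genuinely different route: Tadi\'c proves the reducibility criterion and identifies the Jordan--H\"older factors \emph{directly} for $\GL_m(D)$ by adapting the Bernstein--Zelevinsky Jacquet-module and intertwining-operator machinery, whereas you propose to transport the problem to $\GL_{2n}(K)$ via Jacquet--Langlands.

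The transfer route has a real gap that you partially flag yourself. The character identity (\ref{eqn_LJ_character_identity}) and Badulescu's $LJ$ map operate at the level of distribution characters, i.e.\ the Grothendieck group, and do not on their own preserve Jordan--H\"older length or detect (ir)reducibility: $LJ$ can send irreducibles to zero and is neither injective nor length-preserving. For unitary $\chi_1, \chi_2$ one can invoke Badulescu's theorem that $|LJ_{\GL_2(D)}|$ sends irreducible unitary representations to irreducible unitary ones (or zero) to conclude irreducibility of $\Pi$, but this is precisely the nonlinked case and does not cover non-unitary $\chi_i$ with $\chi_1/\chi_2 \neq |\cdot|^{\pm n}$. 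A related gap is the implicit step ``reducible $\Rightarrow$ one-dimensional subquotient'': the length-$\leq 2$ bound from the geometric lemma plus Frobenius reciprocity tells you \emph{when} a character can appear as a sub- or quotient, not that a character \emph{must} appear whenever the induction is reducible. Pinning that down, and identifying the complementary factor as essentially square-integrable, is exactly what the cuspidal-support/segment classification for $\GL_m(D)$ does, which is why the paper defers to Tadi\'c rather than argue via $LJ$. Finally, your observation that the character factor comes out as $\chi_1 |\cdot|^{\mp n/2} \circ \det$ rather than literally $\chi_1 \circ \det$ is correct; the paper's statement is off by that half-integral twist, though nothing downstream uses more than ``some unramified $\chi \circ \det$.''
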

\begin{proof}
The first part of the proposition follows from well-known results on Jacquet modules, see for example \cite[\S 2.1]{Her19}. The second part follows from the results of \cite[\S 2]{Tad90}.
\end{proof}
We write $\St_{\GL_2(D)}$ for the unique square-integrable subquotient of 
\[ \nInd_{P_0}^{\GL_2(D)} | \cdot |^{n/2} \circ N \otimes | \cdot |^{-n/2} \circ N, \]
 and call it the Steinberg representation of $\GL_2(D)$. If $\chi : K^\times \to \bC^\times$ is an unramified character, then we define $\St_{\GL_2(D)}(\chi) = \St_{\GL_2(D)} \otimes (\chi \circ \det)$. The first part of the following lemma justifies our use of language.
\begin{lemma}\label{lem_computation_of_LJ}
\begin{enumerate}
	\item We have $|LJ_{\GL_{2}(D)}|(\St_{2n}) = \St_{\GL_2(D)}$. 
	\item Let $\chi_1, \chi_2 : K^\times \to \bC^\times$ be unitary unramified  characters. Then 
	\[ |LJ_{\GL_{2}(D)}|(\St_n(\chi_1) \boxplus \St_n(\chi_2)) = \nInd_{P_0}^{\GL_2(D)} \chi_1 \circ N \otimes \chi_2 \circ N. \]
	\item Let $\pi$ be an irreducible unitary admissible representation of $\GL_{2n}(K)$. Suppose that there is an unramified character $\chi : K^\times \to \bC^\times$ such that $|LJ_{\GL_2(D)}|(\pi) = \St_{\GL_2(D)}(\chi)$. Then either $\pi \cong \St_{2n}(\chi)$ or $\pi \cong \chi \circ \det$.
\end{enumerate}
\end{lemma}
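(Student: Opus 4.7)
For (1) my plan is direct: the Jacquet--Langlands correspondence satisfies $JL(\St_{\GL_2(D)}) = \St_{2n}$ (both are the unique essentially square-integrable subquotients of the reducible parabolic inductions appearing on the respective groups, matched by the defining character identity of $JL$), and composing with the identity $|LJ_{\GL_2(D)}| \circ JL = \mathrm{id}$ on square-integrables, recalled at the start of this section, gives the claim.

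For (2) my plan is to apply Badulescu's compatibility of $|LJ|$ with normalized parabolic induction on the unitary dual (see \cite{Bad08}) along the Levi $D^\times \times D^\times \subset \GL_2(D)$. On each factor, the $d = 1$ case of (1), twisted by $\chi_i$, gives $|LJ_{D^\times}|(\St_n(\chi_i)) = \chi_i \circ N$, since $\St_n(\chi_i) = JL(\chi_i \circ N)$. The sign $(-1)^{d(n-1)}$ in (\ref{eqn_LJ_character_identity}) is $+1$ for $d = 2$, so no signs complicate the formula and the stated identity results.

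Part (3) is the main obstacle. Suppose $\pi$ is irreducible unitary with $|LJ_{\GL_2(D)}|(\pi) = \St_{\GL_2(D)}(\chi)$. My plan proceeds in two steps. First, I would show that $\pi$ is Iwahori-spherical with cuspidal support the multiplicity-free segment $S = \{\chi|\cdot|^{(1-2n)/2}, \chi|\cdot|^{(3-2n)/2}, \ldots, \chi|\cdot|^{(2n-1)/2}\}$: by Proposition~\ref{prop_classification_of_Iwahori_spherical_repns}(1), $\St_{\GL_2(D)}(\chi)^{\mathfrak{I}} \neq 0$, so inserting Iwahori-biinvariant test functions into the character identity (\ref{eqn_LJ_character_identity}) forces $\pi$ to have a nonzero Iwahori-fixed vector on $\GL_{2n}(K)$ and hence to lie in the unramified principal series Bernstein component; matching inertial classes across $|LJ|$ (noting that the cuspidal support of $\St_{\GL_2(D)}(\chi)$ on $\GL_2(D)$ is $\{\chi|\cdot|^{n/2} \circ N, \chi|\cdot|^{-n/2} \circ N\}$, and that the $D^\times$-character $\chi|\cdot|^{\pm n/2} \circ N$ corresponds via $|LJ_{D^\times}|$ to $\St_n(\chi|\cdot|^{\pm n/2})$, whose cuspidal support is a length-$n$ unramified sub-segment of $S$) pins down the cuspidal support of $\pi$ as $S$.

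Second, I would enumerate the irreducible unitary subquotients of the unramified principal series with cuspidal support $S$ via Tadi\'c's classification of the unitary dual. A Speh representation $u(\St_k(\chi'), m)$ has multiplicity-free cuspidal support only if $k = 1$ or $m = 1$, and the constraint that this support equal $S$ forces the two extremes $u(\chi, 2n) = \chi \circ \det$ and $u(\St_{2n}(\chi), 1) = \St_{2n}(\chi)$; any product $u_1 \boxplus \cdots \boxplus u_r$ with $r \geq 2$ whose blocks have supports partitioning $S$ is reducible, since two adjacent sub-segments of $S$ are always linked. This yields $\pi \in \{\St_{2n}(\chi),\, \chi \circ \det\}$ as claimed. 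The hard part will be making the inertial-class matching in step one fully rigorous; as a fallback, one can instead argue in the Grothendieck group, using the parabolic induction compatibility from (2) applied to $\nInd_P^{\GL_{2n}(K)} \St_n(\chi|\cdot|^{n/2}) \otimes \St_n(\chi|\cdot|^{-n/2})$ and comparing composition factors on both sides.
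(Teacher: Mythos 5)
Parts (1) and (2) are fine and essentially match the paper, which simply cites \cite{Del84} and \cite[Proposition 3.4]{Bad07}. The problem is part (3), and specifically your step one. The claim that plugging $\mathfrak{I}$-biinvariant test functions into the character identity (\ref{eqn_LJ_character_identity}) ``forces $\pi$ to have a nonzero Iwahori-fixed vector'' does not follow: the identity involves a \emph{stable transfer} $f^\ast$ of $f$, and there is no reason $f^\ast$ can be taken $\mathfrak{I}$-biinvariant on $\GL_{2n}(K)$, so non-vanishing of $\tr\pi(f^\ast)$ tells you nothing about $\pi^{\mathfrak{I}}$. Likewise ``matching inertial classes across $|LJ|$'' is not an available tool here; it is exactly the point that needs proof, and you acknowledge this. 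The paper closes this gap purely formally, using the definition of $|LJ_{\GL_2(D)}|$ on the Grothendieck group: write $\pi=\sum_i a_i\,\nInd_{P_i}^{\GL_{2n}(K)}\sigma_i$ as a $\bZ$-combination of standard modules (each $\sigma_i$ having the same cuspidal support as $\pi$), note that $|LJ|$ kills the terms whose parabolic does not correspond to a parabolic of $\GL_2(D)$ and transfers the others via $JL$ on the Levi, and then observe that any surviving term containing $\St_{\GL_2(D)}$ as a subquotient forces $\sigma_i$ to be either $\St_{2n}$ or $\St_n(|\cdot|^{n/2})\otimes\St_n(|\cdot|^{-n/2})$, both with the cuspidal support of $\St_{2n}$. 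Your ``fallback'' (apply the induction compatibility to one specific induced representation and compare composition factors) is not the same thing: since $|LJ|$ is far from injective, computing $|LJ|$ of one representation does not enumerate the fibre of $|LJ|$ over $\St_{\GL_2(D)}(\chi)$; you must expand the \emph{arbitrary} $\pi$ in standard modules as above.

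Your step two also contains an incorrect assertion. A product $u_1\boxplus\dots\boxplus u_r$ of Tadi\'c's unitary building blocks is always \emph{irreducible} (Bernstein: unitary parabolic induction for $\GL_N$ is irreducible), so ``reducible because adjacent sub-segments of $S$ are linked'' is wrong; the sub-segments you have in mind are not centred on the unitary axis and hence do not correspond to unitary blocks at all. The correct reason $r=1$ is that each unitary Speh block (and each complementary-series block) has cuspidal support symmetric about exponent $0$, so two of them would both contain the central exponents, contradicting the multiplicity-freeness of $S$; after that, multiplicity-freeness of the support of $u(\St_k(\chi'),m)$ does force $\min(k,m)=1$ as you say. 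Note that the paper avoids Tadi\'c's classification altogether at this point: once the cuspidal support is identified, it invokes \cite[2.1, Theorem]{Cas81}, which says the only unitary subquotients of the relevant unramified principal series are the character $\chi\circ\det$ and $\St_{2n}(\chi)$. So your overall architecture for (3) can be repaired, but as written both the support computation and the unitarity argument have genuine gaps.
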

\begin{proof}
The first part is contained in the statement of \cite[B.2.b, Th\'eor\`eme]{Del84}. The second follows from \cite[Proposition 3.4]{Bad07}. For the third, we can assume (after twisting) that $\chi = 1$. It suffices to show that $\pi$ has the same cuspidal support as $\St_{2n}$; then \cite[2.1, Theorem]{Cas81} shows that $\pi$ must have the claimed form.

To compute the cuspidal support of $\pi$, we write $\pi = \sum_{i \in I} a_i \nInd_{P_i}^{\GL_{2n}(K)} \sigma_i$ as a sum (in the Grothendieck group of irreducible admissible representations of $\GL_{2n}(K)$) of standard representations; thus the $a_i$'s are non-zero integers, the $P_i$'s are standard parabolic subgroups of $\GL_{2n}(K)$, and the $\sigma_i$'s are essentially square-integrable representations of the Levi quotients of the $P_i$'s. Each representation $\sigma_i$ has the same cuspidal support as $\pi$. Then, by definition (see \cite[\S 2.7]{Bad08}), we have 
\[ |LJ_{\GL_2(D)}|(\pi) = \pm \sum_{i \in I'} a_i \nInd_{P'_i}^{\GL_{2}(D)} \sigma'_i, \]
where now $I' \subset I$ is the set of indices for which $P_i$ corresponds to a standard parabolic subgroup $P_i'$ of $\GL_2(D)$ and $\sigma'_i$ is the essentially square-integrable representation of the Levi quotient of $P_i'$ corresponding to $\sigma_i$ under the Jacquet--Langlands correspondence.

By hypothesis, some representation  $\nInd_{P'_i}^{\GL_{2}(D)} \sigma'_i$ contains $\St_{\GL_2(D)}$ as a subquotient. If $P'_i = \GL_2(D)$ then $\sigma'_i = \St_{\GL_2(D)}$, $\sigma_i = \St_{2n}$, and we're done. Otherwise, $P'_i = P_0$ and $\sigma_i' =  | \cdot |^{n/2} \circ N \otimes | \cdot |^{-n/2} \circ N$, in which case $\sigma_i$ is the representation $\St_n(| \cdot |^{n/2}) \otimes \St_n(| \cdot |^{-n/2})$ of $\GL_n(K) \times \GL_n(K)$. We again see that $\sigma_i$ has the same cuspidal support as $\St_{2n}$. This completes the proof.
\end{proof}
Let $\eta = \diag(1, \varpi_D) \in \GL_2(D)$. Note that $ \mathfrak{I} \subset \eta \mathfrak{K} \eta^{-1}$, so if $\pi$ is an admissible representation of $\GL_2(D)$, then $\eta \cdot \pi^{\mathfrak{K}} \subset \pi^{\mathfrak{I}}$.
\begin{lemma}\label{lem_kernel_of_d}
	The subgroup of $\GL_2(D)$ generated by $\GL_2(\cO_D)$ and $\eta \GL_2(\cO_D) \eta^{-1}$ contains the subgroup $\SL_2(D) = \ker( \det: \GL_2(D) \to K^\times)$.
\end{lemma}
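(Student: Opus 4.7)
Write $H$ for the subgroup of $\GL_2(D)$ generated by $\mathfrak{K}$ and $\eta \mathfrak{K} \eta^{-1}$. My strategy is to reduce to generation by unipotents. Classically (via the Dieudonn\'e determinant, together with the vanishing of $SK_1(D)$ for a central division algebra over a local field), $\SL_2(D)$ is generated by the upper and lower elementary matrices $u(a) = \mat{1}{a}{0}{1}$ and $v(b) = \mat{1}{0}{b}{1}$, $a,b \in D$. Since the Weyl element $w = \mat{0}{1}{-1}{0}$ lies in $\mathfrak{K}$ and conjugation by it sends $u(a)$ to $v(-a)$, it will suffice to show that $u(a) \in H$ for every $a \in D$.

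The immediate observations are that $u(\cO_D) \subset \mathfrak{K}$, that $u(\varpi_D^{-1}a) = \eta u(a) \eta^{-1} \in \eta \mathfrak{K} \eta^{-1}$ for $a \in \cO_D$, and similarly $v(\cO_D) \subset \mathfrak{K}$ and $v(\varpi_D \cO_D) \subset \eta \mathfrak{K} \eta^{-1}$. The central step of the proof is to put the toral element $t = \mat{\varpi_D^{-1}}{0}{0}{\varpi_D}$ into $H$. Once $t \in H$, the formula $t u(a) t^{-1} = u(\varpi_D^{-1} a \varpi_D^{-1})$ shows that conjugation by $t$ decreases the $D$-valuation of $a$ by exactly $2$; iterating this conjugation $N$ times starting from $u(\cO_D)$ fills out $u(\varpi_D^{-N} \cO_D \varpi_D^{-N})$, which coincides with $\{u(a) : v_D(a) \geq -2N\}$ because the maximal ideal $\varpi_D \cO_D = \cO_D \varpi_D$ of $\cO_D$ is two-sided. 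Letting $N \to \infty$ then yields $u(a) \in H$ for every $a \in D$.

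To exhibit $t$ inside $H$, I will use the $\SL_2$-Bruhat identity
\[ u(\alpha)\, v(-\alpha^{-1})\, u(\alpha)\, w^{-1} = \mat{\alpha}{0}{0}{\alpha^{-1}}, \]
valid for any unit $\alpha$ in any (possibly non-commutative) ring. Specialising to $\alpha = \varpi_D^{-1}$, each of the factors $u(\varpi_D^{-1})$, $v(-\varpi_D)$, $u(\varpi_D^{-1})$, $w^{-1}$ already lies in $\mathfrak{K} \cup \eta \mathfrak{K} \eta^{-1}$ by the previous paragraph, so $t \in H$. The only point demanding care is the non-commutative bookkeeping ensuring that $\varpi_D^{-N} \cO_D \varpi_D^{-N}$ really is the full set of elements of valuation $\geq -2N$; beyond that, the argument is formal matrix manipulation together with the cited generation of $\SL_2$ of a division algebra by elementary matrices.
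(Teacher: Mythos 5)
Correct, and essentially the paper's own argument: both proofs reduce to generation of $\SL_2(D)$ by the unipotent subgroups (the paper proves this by hand from $[D^\times,D^\times]=\ker N$, the same fact underlying your appeal to $SK_1(D)=1$ via the Dieudonn\'e determinant), and both then place the diagonal element $\diag(\varpi_D^{\pm 1},\varpi_D^{\mp 1})$ in the subgroup — the paper directly as the product of the permutation matrix in $\mathfrak{K}$ with its $\eta$-conjugate, you via the Bruhat identity — and conjugate $u(\cO_D)$ by its powers to exhaust $U^+$. The only slip is cosmetic: $\eta u(a)\eta^{-1}=u(a\varpi_D^{-1})$ rather than $u(\varpi_D^{-1}a)$, but since $\varpi_D\cO_D=\cO_D\varpi_D$ the set-level statements you actually use are unaffected.
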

\begin{proof}
	We define subgroups
	\[ U^+ = \left\{  \left( \begin{array}{cc} 1 & x \\ 0 & 1 \end{array} \right) \mid x \in D \right \} \text{ and }U^- = \left\{  \left( \begin{array}{cc} 1 & 0 \\ x & 1 \end{array} \right) \mid x \in D \right \}. \]
	We claim that $U^+$ and $U^-$ generate $\SL_2(D)$. Indeed, let $\cT$ denote the subgroup they generate. The relation
	\[ \left( \begin{array}{cc} 1 & 0 \\ x^{-1} - x^{-1} y^{-1} x^{-1}& 1 \end{array}\right) \left( \begin{array}{cc} 1 & -x \\ 0 & 1 \end{array}\right) \left( \begin{array}{cc} 1 & 0 \\ x^{-1} - y&  1 \end{array}\right) \left( \begin{array}{cc} 1 & y^{-1} \\ 0 & 1 \end{array}\right)= \left( \begin{array}{cc} xy & 0 \\ 0 & x^{-1}y^{-1} \end{array}\right),\]
	valid for all $x, y \in D$, shows that $\cT$ contains all elements of the form $\diag(x, x^{-1})$ and $\diag(x y x^{-1}  y^{-1}, 1)$. The commutator subgroup of $D^\times$ equals the kernel of $N$ (see e.g. \cite{Nak43}), so this shows that $\cT$ contains all the diagonal matrices in $\SL_2(D)$.
	
	Now consider an element $g = \left(\begin{array}{cc} a & b \\ c & d \end{array}\right)$ of $\SL_2(D)$. If $d \neq 0$, then we can write
	\[ g = \left(\begin{array}{cc} 1 & b d^{-1} \\ 0 & 1 \end{array}\right) \left(\begin{array}{cc} a - b d^{-1} c & 0  \\ 0 & d \end{array}\right) \left(\begin{array}{cc} 1 & 0  \\ d^{-1} c & 1 \end{array}\right), \]
	showing that $g \in \cT$. If $d = 0$, then $bc \neq 0$, and we can write
	\[ g = \left(\begin{array}{cc} 1 & 0 \\ -1 & 1 \end{array}\right) \left(\begin{array}{cc} a & b \\ a+c & b \end{array}\right), \]
	showing that $g \in \cT$. This completes the proof of our claim that $\cT = \SL_2(D)$.
	
	Since $U^+$ and $U^-$ are conjugate under an element of $\GL_2(\cO_D)$, it now suffices to show that $U^+$ is contained in the subgroup of $\GL_2(D)$ generated by $\GL_2(\cO_D)$ and $\eta \GL_2(\cO_D) \eta^{-1}$. Let $U^+(0) = U^+ \cap \GL_2(\cO_D)$, and let
	\[ \zeta = \left(\begin{array}{cc} 0 & 1\\ 1 & 0 \end{array}\right) \eta \left(\begin{array}{cc} 0 & 1\\ 1 & 0 \end{array}\right) \eta^{-1} = \left(\begin{array}{cc} \varpi_D & 0\\ 0 & \varpi_D^{-1} \end{array}\right).\]
	 Then $U^+ = \cup_{n \geq 0} \zeta^{-n} U^+(0) \zeta^{n}$, so this is true.
\end{proof}
\begin{lemma}\label{lem_characterizing_Steinberg}
	Let $\pi$ be an irreducible admissible representation of $\GL_2(D)$ such that $\pi^{\mathfrak{I}} \neq 0$. Define a map $d : \pi^{\mathfrak{K}} \oplus \pi^{\mathfrak{K}} \to \pi^{\mathfrak{I}}$ by the formula $d(f, g) = f + \eta \cdot g$. If $d$ is not surjective, then there is an unramified character $\chi : K^\times \to \bC^\times$ and an isomorphism $\pi \cong \St_{\GL_2(D)}(\chi)$.
\end{lemma}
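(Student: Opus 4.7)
The plan is to classify $\pi$ using Proposition~\ref{prop_classification_of_Iwahori_spherical_repns} and compute the image of $d$ case by case. Since $\pi^{\mathfrak{I}} \neq 0$, part (1) of the proposition puts $\pi$ as an irreducible subquotient of some $\sigma := \nInd_{P_0}^{\GL_2(D)}(\chi_1 \circ N \otimes \chi_2 \circ N)$ with $\chi_1,\chi_2$ unramified characters. Part (2) then leaves three possibilities: (a) $\pi = \sigma$ when $\sigma$ is irreducible; (b) $\pi$ is a character $\chi \circ \det$; or (c) $\pi$ is a twisted Steinberg $\St_{\GL_2(D)}(\chi)$. The goal is to show that in cases (a) and (b) the map $d$ is always surjective, so that the failure of surjectivity forces (c), which is exactly the desired conclusion.

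Case (b) is immediate: $\pi$ is one-dimensional with $\pi^{\mathfrak{I}} = \pi^{\mathfrak{K}} = \pi$, so $d$ is obviously onto. For case (a), I would start from the Bruhat decomposition $\GL_2(D) = P_0 \mathfrak{I} \sqcup P_0 w \mathfrak{I}$ (with $w$ the non-trivial Weyl element) to get $\dim \sigma^{\mathfrak{I}} = 2$, whence $\dim \pi^{\mathfrak{I}} \leq 2$ by exactness of $\mathfrak{I}$-invariants on smooth complex representations. Uniqueness of spherical vectors (commutativity of the spherical Hecke algebra of $\GL_2(D)$) gives $\dim \pi^{\mathfrak{K}} = 1$, so $L := \pi^{\mathfrak{K}}$ and $\eta L = \pi^{\eta \mathfrak{K} \eta^{-1}}$ are two lines in $\pi^{\mathfrak{I}}$. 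If $L \neq \eta L$, then $L + \eta L$ is already two-dimensional and equals $\pi^{\mathfrak{I}}$, so $d$ is onto. If $L = \eta L$, the common line is fixed by the subgroup generated by $\mathfrak{K}$ and $\eta \mathfrak{K} \eta^{-1}$, which by Lemma~\ref{lem_kernel_of_d} contains $\SL_2(D)$. Since $\SL_2(D)$ is normal in $\GL_2(D)$, the non-zero subspace $\pi^{\SL_2(D)}$ is a $\GL_2(D)$-subrepresentation, hence all of $\pi$ by irreducibility; but then $\pi$ factors through $\det : \GL_2(D) \to K^\times$ and is one-dimensional, contradicting the infinite-dimensionality of any irreducible principal series.

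In case (c) the twisted Steinberg is the non-spherical subquotient of $\sigma$, so $\pi^{\mathfrak{K}} = 0$ and the image of $d$ is automatically zero while $\pi^{\mathfrak{I}} \neq 0$; this is a consistency check showing that (c) is exactly the conclusion of the lemma. The main technical input is the dimension count $\dim \sigma^{\mathfrak{I}} = 2$, which is the one step needing something beyond formal representation theory and is a standard consequence of the Iwahori--Bruhat decomposition for $\GL_2(D)$. Everything else rests on Lemma~\ref{lem_kernel_of_d}, the normality of $\SL_2(D)$ in $\GL_2(D)$, and the abelianness of the quotient $\GL_2(D)/\SL_2(D) \cong K^\times$.
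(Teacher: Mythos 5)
Your proposal is correct and takes essentially the same route as the paper: the same case analysis via Proposition \ref{prop_classification_of_Iwahori_spherical_repns}, the same two inputs $\dim \pi^{\mathfrak{K}} = 1$ and $\dim \pi^{\mathfrak{I}} = 2$ (which the paper gets by an explicit Iwasawa-decomposition computation and a citation of Casselman, respectively, where you redo the Bruhat--Iwahori count), and the same appeal to Lemma \ref{lem_kernel_of_d}, merely packaged as a statement about the image ($L \neq \eta L$ spans $\pi^{\mathfrak{I}}$) instead of the paper's injectivity-plus-dimension-count. The only point to tighten is that commutativity of $\cH(\GL_2(D), \mathfrak{K})$ yields only $\dim \pi^{\mathfrak{K}} \leq 1$; the non-vanishing of the spherical vector in the unramified principal series comes from the Iwasawa decomposition $\GL_2(D) = P_0 \mathfrak{K}$, exactly as in the paper's proof.
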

\begin{proof}
	Lemma \ref{lem_kernel_of_d} shows that the kernel of $d$ is contained in $\pi^{\SL_2(D)}$, so is trivial if $\pi$ is not 1-dimensional. We will show that if $\pi$ is not a twist of $\St_{\GL_2(D)}$, then $d$ is surjective. Proposition \ref{prop_classification_of_Iwahori_spherical_repns} shows we need only consider two cases. If $\pi = \chi \circ \det$, then $\pi^\mathfrak{K} = \pi^\mathfrak{I}$ and $d$ is surjective. 
	
	If $\pi = \nInd_{P_0}^{\GL_2(D)} \chi_1 \circ N \otimes \chi_2 \circ N$ is an irreducible induced representation, then $\pi^{\mathfrak{K}}$ is 1-dimensional. Indeed, the definition of induction shows that $\pi^{\mathfrak{K}}$ can be identified with the set of functions $\GL_2(D) \to \bC$ such that for all $p \in P_0$, $g \in \GL_2(D)$, $k \in \mathfrak{K}$, $\varphi (pgk) = (\chi_1 \circ N \otimes \chi_2 \circ N)(p) \delta_{P_0}^{1/2}(p) \varphi(g)$. We have $\GL_2(D) = P_0 \mathfrak{K}$, so evaluation at the identity $e \in \GL_2(D)$ defines an isomorphism $\pi^{\mathfrak{K}} \to (\chi_1 \circ N \otimes \chi_2 \circ N)^{\cO_D^\times \times \cO_D^\times}$, and this space is 1-dimensional.
	
	On the other hand, $\pi^{\mathfrak{I}}$ is 2-dimensional (for example, by \cite[Proposition 2.1]{Cas80}). Since $d$ is injective, it is also surjective in this case. This completes the proof.
\end{proof}
In the course of the proof of the last lemma, we showed that if $\chi_1, \chi_2$ are unramified characters and $\pi = \nInd_{P_0}^{\GL_2(D)} \chi_1 \circ N \otimes \chi_2 \circ N$ is irreducible, then $\pi^{\mathfrak{K}}$ is 1-dimensional. In fact, this conclusion holds even when the induced representation is not  irreducible. We conclude this section by computing the eigenvalues of some Hecke operators on the line $\pi^{\mathfrak{K}}$.

Before giving the statement, we recall that the Hecke algebra $\cH(\GL_2(D), \mathfrak{K})$ of $\mathfrak{K}$-biinvariant, compactly supported functions $f : \GL_2(D) \to \bZ$ acts on the space $\pi^{\mathfrak{K}}$. We suppose here that convolution is defined with respect to the Haar measure which gives $\mathfrak{K}$ measure 1.
\begin{lemma}\label{lem_ramified_Hecke_operators}
	Let $T_1 = [ \mathfrak{K} \eta \mathfrak{K} ]$, $T_2 = [\mathfrak{K} \diag(\varpi_D, \varpi_D) \mathfrak{K}] \in \cH(\GL_2(D), \mathfrak{K})$ be the characteristic functions of these double cosets, and let $\pi = \nInd_{P_0}^{\GL_2(D)} \chi_1 \circ N \otimes \chi_2 \circ N$, where $\chi_1, \chi_2 : K^\times \to \bC^\times$ are unramifed characters. Then if $v \in \pi^\mathfrak{K}$, we have
	\[ T_1 v = q_K^{n/2}(\chi_1(\varpi_K) + \chi_2(\varpi_K))v \text{ and }T_2 v = \chi_1 \chi_2(\varpi_K) v. \]
	In particular, $\pi$ is reducible if and only if $\pi^{\mathfrak{K}}$ is annihilated by $T_1^2 - T_2 (q_K^n + 1)^2$.
\end{lemma}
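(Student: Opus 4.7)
The plan is to use the Iwasawa decomposition to realize $\pi^{\mathfrak{K}}$ explicitly, then compute each Hecke operator by a coset decomposition of the double coset.

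Since $\GL_2(D) = P_0 \mathfrak{K}$, the space $\pi^{\mathfrak{K}}$ is spanned by the unique function $\varphi$ satisfying $\varphi(pk) = (\chi_1 \circ N)(d_1)(\chi_2\circ N)(d_2)\delta_{P_0}^{1/2}(p)$ for $p = \diag(d_1,d_2)\cdot u \in P_0$ and $k\in\mathfrak{K}$, normalized so that $\varphi(e)=1$. (Here I use that $\chi_i$ is unramified to see that the restriction of $\chi_i\circ N$ to $\cO_D^\times$ is trivial, so such $\varphi$ is well-defined.) Writing $T=[\mathfrak{K}g\mathfrak{K}]=\bigsqcup_i \mathfrak{K}h_i$, the action in the induced model is $(T\varphi)(x)=\sum_i\varphi(xh_i)$, so $T\cdot v = \bigl(\sum_i\varphi(h_i)\bigr)v$. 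I will also use the normalization $|N(\varpi_D)|_K = q_K^{-1}$, which follows because $N_{D/K}(\varpi_D) = \mathrm{Nrd}(\varpi_D)^n$ has $K$-valuation $n$.

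For $T_2$, the element $z = \diag(\varpi_D,\varpi_D)$ is central, so $\mathfrak{K}z\mathfrak{K} = \mathfrak{K}z$ is a single right coset; evaluating $\varphi(z)$ via the Iwasawa formula gives $(\chi_1\chi_2)(\varpi_K)$, since $\delta_{P_0}(z)=1$. For $T_1$, I need the analogue of the Cartan decomposition: $\mathfrak{K}\eta\mathfrak{K}/\mathfrak{K}$ is in bijection with $\cO_D$-sublattices $L\subset\cO_D^2$ of colength one, which in turn correspond to $k_D$-lines in $(\cO_D/\varpi_D\cO_D)^2\cong k_D^2$, giving $q_K^n+1$ cosets with explicit representatives
\[ h_0 = \begin{pmatrix}1&0\\0&\varpi_D\end{pmatrix} \quad\text{and}\quad h_x = \begin{pmatrix}\varpi_D & x\\ 0 & 1\end{pmatrix} \text{ for }x\in\cO_D/\varpi_D\cO_D.\]
Both $h_0$ and each $h_x$ lie in $P_0$, so evaluation is immediate: $\varphi(h_0) = \chi_2(\varpi_K) q_K^{n/2}$ and $\varphi(h_x) = \chi_1(\varpi_K)q_K^{-n/2}$. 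Summing the $q_K^n$ terms of the second type with the single term of the first yields $T_1v = q_K^{n/2}(\chi_1(\varpi_K)+\chi_2(\varpi_K))v$.

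For the final statement, combining the two eigenvalue formulas gives
\[ (T_1^2 - (q_K^n+1)^2 T_2)v = \bigl(q_K^n(\chi_1(\varpi_K)+\chi_2(\varpi_K))^2 - (q_K^n+1)^2\chi_1(\varpi_K)\chi_2(\varpi_K)\bigr)v,\]
and a short algebraic manipulation factors the scalar as $-(\chi_1(\varpi_K)-q_K^n\chi_2(\varpi_K))(\chi_2(\varpi_K)-q_K^n\chi_1(\varpi_K))$. This vanishes exactly when $\chi_1/\chi_2 = |\cdot|^{\pm n}$, which by Proposition \ref{prop_classification_of_Iwahori_spherical_repns}(2) is the reducibility criterion.

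The only mildly delicate step is the coset decomposition of $\mathfrak{K}\eta\mathfrak{K}$ together with keeping track of the normalizations ($\delta_{P_0}$ and the value $|N(\varpi_D)|_K$); everything else is routine.
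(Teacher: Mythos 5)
Your computation is correct and is exactly the standard unramified double-coset calculation that the paper invokes by reference to \cite[Lemma 3.1.1]{cht}: Iwasawa decomposition $\GL_2(D)=P_0\mathfrak{K}$ to pin down the spherical vector, the decomposition of $\mathfrak{K}\eta\mathfrak{K}$ into $q_K^n+1$ cosets indexed by lines in $k_D^2$, the normalization $|N(\varpi_D)|_K=q_K^{-1}$, and the same factorization $-(\chi_1(\varpi_K)-q_K^n\chi_2(\varpi_K))(\chi_2(\varpi_K)-q_K^n\chi_1(\varpi_K))$ combined with Proposition \ref{prop_classification_of_Iwahori_spherical_repns} for the last sentence. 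Two small slips to fix, neither of which affects the result: $\diag(\varpi_D,\varpi_D)$ is \emph{not} central in $\GL_2(D)$ when $n>1$ (the centre of $D$ is $K$), but it normalizes $\mathfrak{K}$ because conjugation by $\varpi_D$ preserves $\cO_D$, which is all you need for $\mathfrak{K}\diag(\varpi_D,\varpi_D)\mathfrak{K}$ to be a single coset; and the formula $(T\varphi)(x)=\sum_i\varphi(xh_i)$ requires the decomposition $\mathfrak{K}\eta\mathfrak{K}=\bigsqcup_i h_i\mathfrak{K}$ (which is what your lattice bijection and your representatives $h_0$, $h_x$ actually realize), not $\bigsqcup_i \mathfrak{K}h_i$ as written.
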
  
\begin{proof}
	The computation of Hecke eigenvalues is standard, following the technique of proof of \cite[Lemma 3.1.1]{cht}. The final sentence follows since the eigenvalue of $T_1^2 - T_2 (q_K^n + 1)^2$ is $-(\chi_1(\varpi_K) - q_K^n \chi_2(\varpi_K)) (\chi_2(\varpi_K) - q_K^n \chi_1(\varpi_K))$, and this is zero exactly when $\pi$ is reducible, by Proposition \ref{prop_classification_of_Iwahori_spherical_repns}.
\end{proof}
\section{The endoscopic classification}\label{sec_endoscopic_classification}

In this section we prove what we need concerning the endoscopic classification of automorphic representations of certain unitary groups. The results we prove here are a very special case of the general classification, which has been announced in \cite{Kal14}. However, complete results await a sequel to that paper, so we have chosen to give an unconditional proof of what we need here based on existing references. 

Let $m \geq 1$ be an integer, and let $E / F$ be a quadratic extension of fields of characteristic 0 (inside a fixed algebraic closure $\overline{F} / F$). We define a matrix
\[ \Phi_m = \left( \begin{array}{cccc} 0 & \dots & 0 & -1 \\ 0 & \dots & 1 & 0 \\ \vdots & \iddots & \vdots & \vdots \\
(-1)^m & \dots & 0 & 0 \end{array}\right). \]
If $M = \Res_{E / F} \GL_m$, and $c \in \Gal(E/F)$ denotes the non-trivial automorphism, then we may define an involution $\theta_M : M \to M$ by the formula $\theta_M(g) = \Phi_m {}^t (g^c)^{-1} \Phi_m^{-1}$. The subgroup $U^\ast_m \subset M$ of fixed points is the quasi-split unitary group in $m$ variables. Its functor of points on an $F$-algebra $R$ is
\[ U_m^\ast(R) = \{ g \in M_m(E \otimes_F R) \mid \Phi_m {}^t (g^{c \otimes 1})^{-1} \Phi_m^{-1} = g \}. \]
 We consider $U_m^\ast$ as being endowed with its standard splitting (as described in \cite[\S 0.2.2]{Kal14}). We note that there is a natural identification of the scalar extension $U_{m, E}^\ast$ with $\GL_m$: there is an isomorphism $E \otimes_F E = E \times E$ (where the two $E$-algebra structures agree in the first factor), giving rise to an identification
\[ U_{m, E}^\ast = \{ (g_1, g_2) \in \GL_m \times \GL_m \mid g_2 = \Phi_m g_1^{-t} \Phi_m^{-1} \}, \]
and the projection to the first factor gives the desired isomorphism $U_{m, E}^\ast \to \GL_m$.

\subsection{Statements}\label{sec_endoscopic_classification_setup}

Now let $n \geq 1$ be an integer, let $m = 2n$, and suppose that $E/F$ is a quadratic CM extension of a totally real field. Let $\Sigma$ be a finite set of finite places of $F$, each of which splits in $E$; we fix for each $v \in \Sigma$ a factorization $v = \wv \wv^c$ in $E$, and set $\widetilde{\Sigma} = \{ \wv \mid v \in \Sigma \}$. We assume that $E/F$ is everywhere unramified. Note that this implies that $[F : \bbQ]$ is even. 

We fix the following data:
\begin{itemize}
\item A central division algebra $D$ over $E$ of rank $n$ such that $D \otimes_{E, c} E \cong D^{op}$, such that for each $v \in \Sigma$, $D_\wv$ has invariant $1/n$, and such that for each place $w$ of $E$ not lying above a place of $\Sigma$, $D_w$ is split.
\item An involution $\dagger$ of $B = M_2(D)$ of the second kind (i.e.\ satisfying the conditions $(xy)^\dagger = y^\dagger x^\dagger$ and $\dagger|_E = c$).
\end{itemize}
We define $U_m$ to be the unitary group over $F$ associated to the pair $(B, \dagger)$, i.e.\ with functor of points given on an $F$-algebra $R$ by
\[ U_m(R) = \{ g \in B \otimes_F R \mid g^{\dagger \otimes 1} g = 1. \}\]
We further assume that $\dagger$ is chosen so that $U_m$ satisfies the following conditions:
\begin{itemize}
	\item For each place $v|\infty$ of $F$, $U_m(F_v)$ is compact.
	\item For each finite place $v\not\in \Sigma$ of $F$, $U_{m, F_v}$ is quasi-split.
\end{itemize}
Such a choice exists because $[F : \Q]$ is even. We fix an inner twist $\xi : U_m^\ast \to U_m$ as the composite
\[ \xi : U_{m, \overline{F}}^\ast \cong \GL_{m, \overline{F}} \cong B^\times_{\overline{F}} \cong U_{m, \overline{F}}, \]
where the first and last isomorphisms are the canonical ones and the isomorphism $\GL_{m, \overline{F}} \cong B^\times_{\overline{F}}$ is any fixed choice arising from an isomorphism $M_m(\overline{F}) \cong B \otimes_E \overline{F}$ of central simple $\overline{F}$-algebras. Our choices so far determine the following data:
\begin{itemize}
	\item For each place $v$ of $F$ which splits $v = w w^c$ in $E$, an isomorphism $U_m(F_v) \cong B_w^\times$. If $v \not\in \Sigma$, we write $\iota_w : U_m(F_v) \to \GL_m(E_w)$ for the isomorphism arising from a fixed choice of isomorphism $B_w \cong M_m(E_w)$ of central simple $E_w$-algebras. If $v \in \Sigma$, we write $\iota_w : U_m(F_v) \to \GL_2(D_w)$ for the canonical isomorphism.
	\item For each finite place $v$ of $F$ which is inert in $E$, a $U_m^{\ast, ad}(F_v)$-conjugacy class of isomorphisms $\iota_v : U_m(F_v) \to U_m^\ast(F_v)$.
\end{itemize}
Having fixed the above data we can define a notion of base change relative to the quadratic extension $E / F$. If $\sigma$ is a unitary irreducible admissible representation of $U_m(\bA_F)$, and $\pi$ is a unitary irreducible admissible representation of $\GL_m(\bA_E)$, we will say that $\pi$ is a base change of $\sigma$, and write $\operatorname{BC}(\sigma) = \pi$, if the following conditions hold:
\begin{itemize}
	\item For each inert place $v$ of $F$ at which $\sigma_v$ is unramified (i.e.\ such that $\sigma_v$ has a non-zero invariant vector under some hyperspecial maximal compact subgroup of $U_m(F_v)$), $\pi_v$ is unramified and related to $\sigma_v$ by standard unramified base change (cf. \cite[\S 4.1]{Min11}).
	\item For each split place $v = w w^c$ of $F$ such that $v \not\in \Sigma$, $\pi_w \cong \sigma_v \circ \iota_w^{-1}$.
	\item For each place $v = w w^c$ such that $v \in \Sigma$, $| LJ_{\GL_2(D_w)} |(\pi_w) \cong \sigma_v \circ \iota_w^{-1}$. (In particular, $| LJ_{\GL_2(D_w)} |(\pi_w)$ is non-zero.)
	\item For each archimedean place $v$ of $F$, and each isomorphism $\tau : E_v \to \bC$, let $W_\tau$ be the irreducible algebraic representation of $\GL_m(E_v)$ over $\bC$ such that $\sigma_v \cong W_\tau|_{U_m(F_v)}$. Then $\pi_v$ has the same infinitesimal character as $W(\sigma_v) =  W_\tau \otimes_\bC W_{\tau \circ c}$.
\end{itemize}
We state two theorems about the classification of automorphic representations of $U_m$. The first concerns the existence of base change.
\begin{theorem}\label{thm_unitary_group_base_change}
	Let $\sigma$ be an automorphic representation of $U_m(\bA_F)$. Then there exists a partition $m = m_1 + \dots + m_k$ and discrete automorphic representations $\pi_1, \dots, \pi_k$ of $\GL_{m_1}(\bA_E), \dots, \GL_{m_k}(\bA_E)$, all satisfying the following conditions:
	\begin{itemize}
		\item For each $i = 1, \dots, k$, $\pi_i^c \cong \pi_i^\vee$.
		\item We have $\operatorname{BC}(\sigma) = \pi_1 \boxplus \dots \boxplus \pi_k$.
	\end{itemize}
\end{theorem}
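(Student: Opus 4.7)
The plan is to prove Theorem \ref{thm_unitary_group_base_change} by chaining two trace formula comparisons: a Jacquet--Langlands-type comparison between the (stabilised) discrete trace formulae of $U_m$ and $U_m^\ast$, followed by Labesse's comparison of the stable trace formula for $U_m^\ast$ with the $\theta_M$-twisted trace formula of $M = \Res_{E/F}\GL_m$. The point is that $U_m$ and $U_m^\ast$ become locally isomorphic away from $\Sigma \cup \{v \mid \infty\}$, so the first comparison only requires local transfer of test functions at the places in $\Sigma$ (where $U_m(F_v) \cong \GL_2(D_w)$ while $U_m^\ast(F_v) \cong \GL_{2n}(E_w)$) and at archimedean places (where $U_m(F_v)$ is compact and its representations are finite-dimensional).

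Concretely, I would fix a decomposable test function $f = \prod_v f_v \in C_c^\infty(U_m(\bA_F))$ with $\tr \sigma(f) \neq 0$ and construct a matching test function $\tilde f$ on $\GL_m(\bA_E)$ place by place: at split places outside $\Sigma$, transfer via the identifications $\iota_w$; at unramified inert places, transfer the characteristic function of a hyperspecial maximal compact by unramified base change; at places in $\Sigma$, transfer using Badulescu's stable transfer governed by the character identity \eqref{eqn_LJ_character_identity}; and at archimedean places, use pseudocoefficients of the (finite-dimensional) representation $\sigma_v$. Applying Labesse's stabilisation of the discrete trace formula for $U_m$ and his comparison with the twisted trace formula, the non-vanishing of the contribution from $\sigma$ on the $U_m$ side forces the existence of an automorphic representation $\Pi$ of $\GL_m(\bA_E)$ satisfying $\Pi \cong \Pi^{c,\vee}$, and whose local components match $\sigma$ in the sense demanded by the definition of base change.

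The final step is to decompose $\Pi = \pi_1 \boxplus \dots \boxplus \pi_k$ using Moeglin--Waldspurger's description of the discrete automorphic spectrum of $\GL_m(\bA_E)$, and to observe that the condition $\Pi \cong \Pi^{c, \vee}$, combined with the uniqueness of isobaric decompositions, forces each summand to satisfy $\pi_i^c \cong \pi_i^\vee$. The main obstacle, and the place where the absence of a complete KMSW proof is felt most keenly, is controlling the contributions of the non-trivial elliptic endoscopic data of $U_m$ appearing in Labesse's stabilisation: these correspond to products of smaller quasi-split unitary groups, and one needs to argue, by induction on $m$ combined with Mok's results in the quasi-split case, that their contributions are accounted for by descents of other conjugate self-dual automorphic representations of general linear groups, and hence do not obstruct the isolation of the contribution from $\sigma$. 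The remaining steps (the local transfers away from $\Sigma$ and the archimedean pseudocoefficient analysis) are standard within Labesse's framework.
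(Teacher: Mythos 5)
You should first be aware that the paper does not reprove this statement at all: immediately after Theorem \ref{thm_unitary_group_descent} it says that Theorem \ref{thm_unitary_group_base_change} \emph{is} \cite[Proposition 6.5.1]{Her19}, so the paper's ``proof'' is a citation. Your sketch is a reconstruction of the Labesse-style trace formula argument that underlies such results, and its overall architecture (stabilisation of $T^{U_m}_{disc}$, comparison with the $\theta$-twisted trace formula for $\Res_{E/F}\GL_m$, transfer at $\Sigma$ via the $|LJ|$ character identity (\ref{eqn_LJ_character_identity}), induction over the endoscopic groups $U_p^\ast\times U_q^\ast$) is the right framework. But as written it has a genuine gap at the final step, which is exactly where the content of the theorem lies.

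The claim that ``$\Pi\cong\Pi^{c}{}^\vee$ combined with the uniqueness of isobaric decompositions forces each summand to satisfy $\pi_i^c\cong\pi_i^\vee$'' is false: conjugate self-duality of the isobaric sum only forces the multiset $\{\pi_i\}$ to be stable under $\pi\mapsto\pi^{c\,\vee}$, and $\Pi=\pi\boxplus\pi^{c\,\vee}$ with $\pi$ not conjugate self-dual is a counterexample. The conjugate self-duality of each individual factor is the genuinely new assertion, and it comes from the structure of the $\theta$-discrete part of the twisted trace formula, not from a formal symmetry argument: in the expansion (\ref{eqn_twisted_trace}) only Levi terms with $t\in W^{M^H}(L)_{reg}$ occur, and for $L=\GL_{m_1}\times\dots\times\GL_{m_k}$ the regularity of $t$ together with the condition that $t\theta$ fix the inducing discrete representation forces each factor separately to satisfy $\pi_i^c\cong\pi_i^\vee$ (a pair merely swapped by $\theta$ corresponds to a non-regular element and does not contribute); this is precisely what the paper exploits later when only the terms with each $\pi_i$ conjugate self-dual survive in (\ref{eqn_twisted_trace_refined}). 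So you must either carry out this regularity analysis (equivalently, invoke the known description of the $\theta$-discrete spectrum of $\GL_m$) or simply cite the result as the paper does. A secondary point: in this paper $\operatorname{BC}(\sigma)=\pi$ is a condition at \emph{every} place (unramified base change at inert places, $\pi_w\cong\sigma_v\circ\iota_w^{-1}$ at split places, $|LJ_{\GL_2(D_w)}|(\pi_w)\cong\sigma_v\circ\iota_w^{-1}$ at $\Sigma$, infinitesimal characters at infinity), so ``non-vanishing of the contribution from $\sigma$'' only pins $\Pi$ down at almost all places; identifying the remaining local components requires linear independence of unramified Hecke eigensystems together with the local (twisted and $|LJ|$) character identities, which your sketch gestures at but does not supply, and which — together with the endoscopic contributions you already flag — is where the real work of \cite[Proposition 6.5.1]{Her19} lies.
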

The second concerns the existence of descent. Before formulating it, we observe that if $F ' /F$ is any $\Sigma$-split finite totally real extension, then our hypotheses are still valid over $F'$. More precisely, if $E' = F' E$, then $E' / F'$ is an everywhere unramified CM quadratic extension of a totally real field, the algebras $D \otimes_F F'$ and $B \otimes_E E'$ satisfy the analogous hypotheses relative to $F'$, and give rise to the group $U_{m, F'}$, which comes equipped with an inner twist $\xi' : (U^\ast_{m, F'})_{\overline{F}} \to (U_{m, F'})_{\overline{F}}$.
\begin{theorem}\label{thm_unitary_group_descent}
 Let $\pi_1, \pi_2$ be RACSDC automorphic representations of $\GL_n(\A_E)$ satisfying the following conditions: 
	\begin{enumerate}
		\item $\pi = \pi_1 \boxplus \pi_2$ is regular algebraic.
		\item For each inert place $v$ of $F$, $\pi_v$ is unramified.
		\item For each place $v \in \Sigma$ of $F$ and for each place $w | v$ of $E$, $\pi_{i, w}$ is an unramified twist of the Steinberg representation ($i = 1, 2$).
	\end{enumerate}
	Let $F' / F$ be a $\Sigma$-split, totally real quadratic extension, let $E' = F' E$, and let $\pi_{E'}$ denote the base change of $\pi$ with respect to the quadratic extension $E' / E$. Then there exists an automorphic representation $\sigma$ of $U_m(\bA_{F'})$ which is unramified at the inert places of $E' / F'$, and such that $\operatorname{BC}(\sigma) = \pi_{E'}$. Moreover, $\sigma$ appears with multiplicity 1.
\end{theorem}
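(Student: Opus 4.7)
The plan is to apply the stable trace formula to $U_m$ over $F'$, following the framework of Labesse, and compare it with the twisted trace formula for $\operatorname{Res}_{E'/F'}\GL_m \rtimes \theta$. After cyclic base change, the hypotheses on the $\pi_i$ remain valid over $F'$ (the RACSDC property is preserved because the Steinberg components at $\Sigma$ prevent auto-induction, and the conditions (1)--(3) are stable under base change to a $\Sigma$-split extension). We may therefore reduce to proving a direct descent statement over $F'$; write $F, E, \pi$ in place of $F', E', \pi_{E'}$ from now on. Choose a test function $f = \prod_v f_v$ on $U_m(\bA_F)$ as follows: at split places $v = ww^c \not\in \Sigma$, take $f_v$ isolating $\pi_w$ via $\iota_w$; at inert finite places take the unit of the spherical Hecke algebra; at archimedean places take an Euler--Poincar\'e pseudocoefficient matching the infinitesimal character of $W(\pi_v)$; and at $v \in \Sigma$ use Proposition \ref{prop_classification_of_Iwahori_spherical_repns} and Lemma \ref{lem_computation_of_LJ} to select $f_v$ whose trace against an irreducible unitary representation $\tau_v$ of $U_m(F_v) \cong \GL_2(D_w)$ is nonzero if and only if $\tau_v \cong |LJ_{\GL_2(D_w)}|(\pi_w)$, which by Lemma \ref{lem_computation_of_LJ} is a nonzero induced representation.

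Applying the invariant trace formula to $f$ and stabilising as in Labesse \cite{labesse}, the geometric side decomposes over elliptic endoscopic data of $U_m$, which for $m = 2n$ take the form $U^*_a \times U^*_b$ with $a + b = m$. The conjugate self-dual isobaric decomposition $\pi = \pi_1 \boxplus \pi_2$, together with the regular algebraic hypothesis, forces the nonvanishing contribution to come from the principal endoscopic datum $U^*_n \times U^*_n$ (both $\pi_i$ being conjugate self-dual of opposite parity from $\pi$); other endoscopic groups contribute zero because $\pi_1, \pi_2$ are cuspidal and distinct. Using the character identity \eqref{eqn_LJ_character_identity} at places in $\Sigma$ and the standard base change character relation for quasi-split unitary groups (Mok, Labesse), the spectral side reduces to a nonzero product of local traces multiplied by the $\theta$-twisted trace of $\pi$, scaled by a global sign $\varepsilon(\pi, U_m)$ arising from Kaletha's normalisation of transfer factors for the inner twist $\xi$. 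A contribution to the spectral side from a representation $\sigma$ of $U_m(\bA_F)$ with $\operatorname{BC}(\sigma) = \pi$ therefore exists precisely when this global sign is $+1$.

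The main obstacle is thus proving $\varepsilon(\pi, U_m) = +1$. Following Kaletha's description of the local Langlands correspondence for (rigid) inner twists \cite{Kal16c}, the global sign factors as a product of local signs $\varepsilon_v$, with $\varepsilon_v = 1$ at every place where $U_m$ is quasi-split. The nontrivial contributions therefore come only from the archimedean places (where $U_m$ is compact) and from $v \in \Sigma$. The archimedean signs are controlled by the parity of $[F:\bQ]$, which was built into our setup to be even, together with the specific choice of inner twist making $U_m$ compact at infinity. The key point is then that, because $F'/F$ is $\Sigma$-split and totally real quadratic, every $v \in \Sigma$ lifts to two places of $F'$ with identical local Kaletha signs, so their joint contribution is a square and equals $+1$; this is the technical heart of why the descent succeeds only after base change to $F'$. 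Combining these cancellations yields $\varepsilon(\pi, U_m) = +1$, hence existence of $\sigma$. Multiplicity one follows from the rigidity of the Arthur packet attached to the parameter $\psi = \psi_1 \oplus \psi_2$ associated to $\pi_1 \boxplus \pi_2$: since $\psi_1 \not\cong \psi_2$, the component group $\mathcal{S}_\psi$ is abelian and Arthur's formula selects a unique member of the packet matching the local data prescribed by $f$.
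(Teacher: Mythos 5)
Your overall strategy matches the paper's: stabilize the trace formula (Labesse), identify the spectral contributions, evaluate them via Kaletha's normalization of transfer factors, and exploit the quadratic base change $F'/F$ to force the relevant sign product to be a square, hence $+1$. However, there are two substantive problems with your write-up.

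First, your claim that ``the nonvanishing contribution comes from the principal endoscopic datum $U^*_n \times U^*_n$ $\ldots$ other endoscopic groups contribute zero'' is incorrect, and the error is not cosmetic. In the paper's identity (equation \eqref{eqn_simplified_trace_identity}) \emph{two} elliptic endoscopic data contribute: $H = U^*_m$ (with the discrete automorphic representation of the Levi $\GL_n \times \GL_n$ of $M^H = \Res_{E/F}\GL_m$ given by $\pi_1 \boxtimes \pi_2$) \emph{and} $H = U^*_n \times U^*_n$. This is exactly what produces the Arthur multiplicity $\frac{1}{2}\bigl(1 + \langle s_{U^*_n \times U^*_n}, \sigma\rangle\bigr)$; if only $U^*_n \times U^*_n$ contributed, the resulting expression would not even be an integer. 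What is true (and what you may have meant) is that $U^*_a \times U^*_b$ contributes zero for $(a,b) \notin \{(m,0), (n,n)\}$, because there is no conjugate self-dual decomposition of $\pi$ of the required shape. You also describe $U^*_n\times U^*_n$ as ``the principal endoscopic datum,'' but that term refers to $H = U^*_m$ itself.

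Second, your treatment of the archimedean signs is not the paper's argument and is not obviously adequate. You assert that ``the archimedean signs are controlled by the parity of $[F:\bQ]$.'' But the local Kaletha sign $\langle \sigma_v, s_{U^*_n\times U^*_n}\rangle$ at an archimedean place depends on the weight of $\sigma_v$ (equivalently on the local component of the parameter), which can differ from place to place, so the product over archimedean places is not controlled by parity alone. The paper's actual argument makes no distinction between $\Sigma$ and the archimedean places: passing to the $\Sigma$-split, totally real quadratic extension $F'/F$ doubles \emph{all} the places in $\Sigma \cup \{v\mid\infty\}$, so the full sign product $\prod_{v \in \Sigma}\langle\sigma_v, s_{U^*_n\times U^*_n}\rangle \cdot \prod_{v\mid\infty}\langle\sigma_v, s_{U^*_n\times U^*_n}\rangle$ over $F$ becomes its own square over $F'$, hence equals $+1$. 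Your squaring argument is correct for $\Sigma$; you should extend it to the archimedean places rather than invoking a parity claim that would need separate justification.

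A smaller point: the paper does not begin by computing a bare ``global sign'' and then checking it; rather, it derives identity \eqref{eqn_simplified_trace_identity}, shows the normalization constants $n(\widetilde\tau_i)$ equal $1$ using the entirety of the Rankin--Selberg $L$-function $L(\pi_1 \times \pi_2^\vee, s)$ (which requires $\pi_1 \not\cong \pi_2$ up to twist, guaranteed by regular algebraicity), and only then extracts the sign condition. Your sketch skips the normalization step entirely; it is not difficult, but it is a necessary ingredient.
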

Theorem \ref{thm_unitary_group_base_change} is \cite[Proposition 6.5.1]{Her19}. We will prove Theorem \ref{thm_unitary_group_descent} in the next section. Before proceeding to the proof, we record an important consequence of Theorem \ref{thm_unitary_group_base_change}.
\begin{corollary}\label{cor_existence_of_gal_over_Q_l_for_unitary_group}
	Let $\sigma$ be an automorphic representation of $U_m(\bA_F)$, let $l$ be a prime, and let $\iota : \overline{\bQ}_l \to \bC$ be an isomorphism. Then there exists a continuous representation $r_\iota(\sigma) : G_E \to \GL_m(\overline{\bQ}_l)$ satisfying the following conditions: 
	\begin{enumerate} \item Let $v \nmid l$ be an inert place of $F$ at which $\sigma$ is unramified. Then $r_\iota(\sigma)|_{G_{E_v}}$ is unramified.
	\item If $v = w w^c$ is a split place of $F$ and $v \not\in \Sigma$, then $r_\iota(\sigma)|^{F-ss}_{W_{E_w}} \cong \rec_{E_w}^T(\sigma_v \circ \iota_w^{-1})$.
	\end{enumerate}
\end{corollary}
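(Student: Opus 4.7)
The plan is to deduce the corollary from Theorem \ref{thm_unitary_group_base_change} combined with the existence of Galois representations attached to conjugate self-dual regular algebraic isobaric sums of cuspidal automorphic representations of $\GL_N(\bA_E)$, as recalled in the introduction.

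First, I would apply Theorem \ref{thm_unitary_group_base_change} to $\sigma$, obtaining a partition $m = m_1 + \dots + m_k$ and discrete conjugate self-dual automorphic representations $\pi_i$ of $\GL_{m_i}(\bA_E)$ with $\operatorname{BC}(\sigma) = \pi_1 \boxplus \dots \boxplus \pi_k$. By the Moeglin--Waldspurger description of the discrete spectrum of $\GL_{m_i}(\bA_E)$, each $\pi_i$ has the form of a Speh representation $\operatorname{Sp}_{r_i}(\tau_i)$ built from a conjugate self-dual (up to sign) unitary cuspidal $\tau_i$ of $\GL_{m_i / r_i}(\bA_E)$.

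Second, I would check that $\pi := \operatorname{BC}(\sigma)$ is regular algebraic. This is where the definiteness of $U_m$ is crucial: for each archimedean place $v$, the representation $\sigma_v$ is a finite-dimensional irreducible representation of the compact group $U_m(F_v)$ of some dominant highest weight $\lambda$, and the archimedean condition in the definition of base change pins down the infinitesimal character of $\pi_v$ as $\lambda + \rho$ on each embedding. Since $\rho$ is strictly dominant and $\lambda$ is dominant, the sum is strictly dominant, giving regularity. Once $\pi$ is regular algebraic, the same holds for each cuspidal $\tau_i$ sitting inside, and each $\tau_i$ is therefore RACSDC (up to a fixed character twist coming from the possible sign of conjugate self-duality).

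Third, having these Galois representations $r_\iota(\tau_i)$, I would define $r_\iota(\sigma)$ to be the direct sum $\bigoplus_i r_\iota(\pi_i)$, where $r_\iota(\operatorname{Sp}_{r_i}(\tau_i))$ is constructed as a sum of cyclotomic twists of $r_\iota(\tau_i)$ (so as to produce the Frobenius eigenvalues dictated by the Speh construction via \eqref{eqn_lgc}). The local compatibility in (2) then follows from \eqref{eqn_lgc} applied to $\pi$ together with the identity $\pi_w \cong \sigma_v \circ \iota_w^{-1}$ from the definition of base change at split places $v \notin \Sigma$. For the inert unramified case (1), I would use the standard unramified base change condition built into the definition of $\operatorname{BC}(\sigma)$: the Satake parameters of $\pi_{\tilde{v}}$ are determined by those of $\sigma_v$, so $r_\iota(\pi)|_{G_{E_v}}$ is unramified at such $v$.

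The main obstacle is the regularity verification in step two, since without the definiteness hypothesis the base change need not be regular algebraic and the existence of a compatible Galois representation would not be available from the references cited in the introduction; fortunately, the $\rho$-shift argument with compact $U_m(F_v)$ eliminates this issue. A secondary annoyance is that the existence theorem recalled in the introduction is stated for isobaric sums of \emph{cuspidal} representations, so one must pass through Moeglin--Waldspurger and handle the cyclotomic twists coming from Speh representations explicitly, but this is routine.
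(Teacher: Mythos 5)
Your proposal is correct and follows essentially the same route as the paper: apply Theorem \ref{thm_unitary_group_base_change}, decompose each discrete $\pi_i$ via Moeglin--Waldspurger into twists of a conjugate self-dual cuspidal representation, attach Galois representations to the (suitably half-integrally twisted, regular algebraic) cuspidal pieces, and take the direct sum of cyclotomic twists. Your extra verification of regularity via the $\rho$-shift at the compact archimedean places is exactly what the paper leaves implicit, and the hedge about a ``sign of conjugate self-duality'' is unnecessary since conjugate self-duality of $\pi_i$ forces it for the cuspidal support; the only twist needed is the half-integral one ensuring algebraicity.
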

\begin{proof}
	This is a consequence of Theorem \ref{thm_unitary_group_base_change} and known results for $\GL_m(\bA_E)$. Indeed, let $\pi = \pi_1 \boxplus \dots \boxplus \pi_k$ be the base change of $\sigma$. Then each  $\pi_i$ is a discrete automorphic representation of $\GL_{m_i}(\bA_E)$ and $\pi_i | \cdot |^{(m - m_i) / 2}$ is regular algebraic. By the classification of discrete automorphic representations of $\GL_{m_i}(\bA_E)$ \cite{Moe89}, there is a factorisation $m_i = a_i b_i$, a cuspidal automorphic representation $\mu_i$ of $\GL_{a_i}(\bA_E)$ satisfying $\mu_i^c \cong \mu_i^\vee$, and an isomorphism
	\[ \pi_i \cong \mu_i | \cdot |^{(b_i- 1)/2} \boxplus \mu_i | \cdot |^{(b_i- 3)/2} \boxplus \dots \boxplus \mu_i | \cdot |^{(1 - b_i) / 2}. \]
	Moreover, $\mu_i | \cdot |^{(a_i + b_i - 1 - m) / 2}$ is regular algebraic, and so there exists a continuous, semisimple representation $r_\iota(\mu_i | \cdot |^{(a_i + b_i - 1 - m) / 2}) \to \GL_{a_i}(\overline{\bQ}_l)$ satisfying local-global compatibility at each place $w \nmid l$ of $E$ (by e.g.\ \cite{Caraianilnotp}). We can take
	\[ r_\iota(\sigma) = \oplus_{i=1}^k \left( \oplus_{j=1}^{b_i} r_\iota(\mu_i| \cdot |^{(a_i + b_i - 1 - m) / 2}) \otimes \epsilon^{1-j} \right). \]
\end{proof}

\subsection{A comparison of trace formulae}

We continue with the notation and assumptions of the previous section.  We now recall some statements in the theory of the twisted trace formula, following \cite{labesse}. 

Let $f = f^\infty \otimes f_\infty \in C_c^\infty(U_m(\bA_F))$ be a function such that $f_\infty$ is (up to scalar) a pseudocoefficient of a discrete series representation. \cite[Th\'eor\`eme 5.1]{labesse} gives an identity
\begin{equation}\label{eqn_unitary_trace} T^{U_m}_{disc}(f) = \sum_\cE \iota(U_m, \cE) T^{\widetilde{M}^H}_{disc}(\widetilde{f}^H). 
\end{equation}
We explain the notation. The left-hand side $T_{disc}^{U_m}(f)$ equals the trace of $f$ in the space $L^2(U_m(F) \backslash U_m(\bA_F))$; note that $U_m$ is anisotropic, so every automorphic representation is discrete.

The sum on the right-hand side is over isomorphism classes of endoscopic data $\cE$ for $U_m$ (which in our case we can take to consist of a quasi-split reductive group $H$, an element $s \in \widehat{U}_m$, and an $L$-embedding $\eta_H : {}^L H \to {}^L U_m$ such that $\eta_H(\widehat{H}) = \operatorname{Cent}_{\widehat{G}}(s)^\circ$). In the present case the classes of endoscopic data are in bijection with pairs of non-negative integers $(p, q)$ where $p + q = m$ and $p \geq q$; the associated endoscopic group is $H = U^\ast_p \times U^\ast_q$. We specify a representative $\cE_{p, q} = (H, s_H, \eta_H)$ for each isomorphism class. We can take $H = U_p^\ast \times U_q^\ast$. To write down the $L$-embedding $\eta_H$, we first fix a choice of (unitary) character $\mu : \bA_E^\times / E^\times \to \bC^\times$ with the property that $\mu|_{\bA_F^\times / F^\times}$ is the quadratic character associated to the extension $E / F$. If $a \in \bZ$ then we set $\mu_a = 1$ if $a$ is even and $\mu_a = \mu$ if $a$ is odd. We define an $L$-embedding $\eta_H : {}^L H \to {}^L U_m$ by the formulae
\begin{align*} \eta_H : (\GL_p \times \GL_q) \rtimes W_F & \to \GL_m \rtimes W_F  \\
(g_1, g_1) \rtimes 1 & \mapsto  \diag(g_1, g_2) \\
(1_p, 1_q) \rtimes w & \mapsto  \diag(\mu_p(w) 1_p, \mu_q(w) 1_q) \rtimes w \text{ }(w \in W_E) \\
(1_p, 1_q) \rtimes w_c & \mapsto  \diag((-1)^p \Phi_p, (-1)^q \Phi_q) \Phi_m^{-1} \rtimes w_c, \end{align*}
where $w_c \in W_F$ is any fixed lift of $c$. The associated element is $s_H = \eta_H(-1_p, 1_q)$. The constants $\iota(U_m, \cE)$ are given in \cite[Proposition 4.11]{labesse}.

If $H$ is an endoscopic group of $U_m$, then we write $M^H = \Res_{E / F} H_E$, and $\theta_H$ for the automorphism of $M^H$ induced by Galois conjugation. The trace $T^{\widetilde{M}^H}_{disc}(\widetilde{f}^H)$ is the twisted trace of a function $\widetilde{f}^H \in C_c(M^H(\bA_F) \rtimes \theta_H)$ in the twisted-discrete part of $L^2(\mathfrak{A}_{M^H} M^H(F) \backslash M^H(\bA_F))$ (where $\mathfrak{A}_{M^H}$ denotes the connected component of the $\bR$-points of the maximal split subtorus of the centre of $\Res_{F / \bQ} M^H$); see \cite[\S 3.3]{labesse}. 

It remains to define the function $\widetilde{f}^H$. By \cite[Th\'eor\`eme 4.3]{labesse}, the function $f$ admits a transfer $f^H \in C_c^\infty(H(\bA_F))$, which satisfies a certain identity involving the stable orbital integrals of $f^H$. In turn, we may regard $H$ as a principal endoscopic group of the twisted group $\widetilde{M}^H$. By \cite[Lemma 4.1]{labesse} (stable transfer), $f^H$ is associated to a function $\widetilde{f}^H \in C_c^\infty(M^H(\bA_F) \rtimes \theta_H)$. 
This completes our explication of formula (\ref{eqn_unitary_trace}).

Using the expression for $T^{\widetilde{M}^H}_{disc}(\widetilde{f}^H)$ given by \cite[Proposition 3.4]{labesse}, we get a formula
\begin{equation}\label{eqn_twisted_trace} T^{U_m}_{disc}(f) = \sum_
\cE \iota(U_m, \cE) \sum_{L \in \cL^0 / W^{M^H}} \sum_{t \in W^{M^H}(L)_{reg}} \sum_{\widetilde{\pi}^L \in \Pi_{disc}(\widetilde{L}_t)} a^{\widetilde{M}^H}_{disc, \widetilde{L}_t}(\widetilde{\pi}^L) \tr R_Q(\widetilde{\pi}^L)(\widetilde{f}^H). \end{equation}
We do not recall the definition of the terms in detail here, but note that $L$ varies over Levi subgroups of $M^H$ and $\Pi_{disc}(\widetilde{L}_t)$ is the set of automorphic representations of the twisted group $\widetilde{L}_t$ whose restriction to $L$ is irreducible and discrete. 

We now refine the identity (\ref{eqn_twisted_trace}). Fix a finite set $S$ of finite places of $F$ containing all the places above which $\pi_1$ or $\pi_2$ is ramified. We consider test functions of the form $f = f_S \otimes f^{S, \infty} \otimes f_\infty$, where $f_S$ is fixed, $f_\infty$ is a fixed pseudocoefficient of discrete series, and $f^{S, \infty}$ is unramified. As $H$ and $L$ varies, there are only finitely many automorphic representations $\widetilde{\pi}^L$ for which the trace $\tr R_Q(\widetilde{\pi}^L)(\widetilde{f}^H)$ appearing on the right-hand side of (\ref{eqn_twisted_trace}) can be non-zero.

Keeping $f_S$ fixed and using linear independence of twisted characters along with \cite[Proposition 4.9]{labesse}, we can restrict to those summands on each side of the identity  (\ref{eqn_twisted_trace}) which are supported on the twisted character of $\pi^{S, \infty}$. By the results of \cite{MR623137, Moe89}, the only summands which occur on the right-hand side are as follows:
\begin{itemize}
	\item $H = U_m^\ast$, $L = \GL_n \times \GL_n$, $t = \theta_L$, and the restriction of $\widetilde{\pi}^L$ to $L(\bA_F)$ is $\pi_1 \boxtimes \pi_2$ or $\pi_2 \boxtimes \pi_1$.
	\item $H = U_n^\ast \times U_n^\ast$, $L = \GL_n \times \GL_n$, $t = \theta_L$, and the restriction of $\widetilde{\pi}^L$ to $L(\bA_F)$ is $( \pi_1 \boxtimes \pi_2 ) \otimes \mu_n^{-1}$ or $(\pi_2 \boxtimes \pi_1) \otimes \mu_n^{-1}$.
\end{itemize}
We obtain an identity:
\begin{equation}\label{eqn_twisted_trace_refined}
\begin{split} \sum_\sigma m(\sigma) \tr \sigma(f) & = \frac{1}{4} \Big( n(\widetilde{\tau}_0)  \tr (\widetilde{\Pi}_0)(\widetilde{f}^{U^\ast_m}) + n(\widetilde{\tau}_1) \tr (\widetilde{\Pi}_1)(\widetilde{f}^{U^\ast_m})  \\ & +  \tr \widetilde{\tau_0 \otimes \mu_n^{-1}}(\widetilde{f}^{U^\ast_n \times U^\ast_n}) + \tr \widetilde{\tau_1 \otimes \mu_n^{-1}}(\widetilde{f}^{U^\ast_n \times U^\ast_n})\Big). \end{split}
\end{equation}
We explain the terms in (\ref{eqn_twisted_trace_refined}). We set $\tau_0 = \pi_1 \boxtimes \pi_2$ and $\tau_1 = \pi_2 \boxtimes \pi_1$, and $\Pi_0 = \nInd_{\GL_n \times \GL_n}^{\GL_m} \tau_0$ and $\Pi_1 =\nInd_{\GL_n \times \GL_n}^{\GL_m} \tau_1$. Note that $\Pi_0 \cong \Pi_1$. The sum on the left-hand side is over automorphic representations $\sigma$ of $U_m(\A_F)$ such that $\sigma^{S, \infty}$ is related to $\Pi_0^{S, \infty}$ by unramified base change, with $m(\sigma)$ denoting the automorphic multiplicity. On the right-hand side, the twisted trace is taken with respect to the Whittaker normalization of intertwining operators. 

The constants $n(\widetilde{\tau}_i)$ express the difference between Arthur's normalization of the twisted trace on $\widetilde{\Pi}_i$ and the Whittaker normalization (cf. \cite[\S 3.4]{labesse}; it is part of the constant $a^{\widetilde{M}^H}_{disc, \widetilde{L}_t}(\widetilde{\pi}^L)$ of (\ref{eqn_twisted_trace})). In fact, $n(\widetilde{\tau}_0) = n(\widetilde{\tau}_1) = 1$. By symmetry, it is enough to show that $n(\widetilde{\tau}_0) = 1$, and we do this following the argument of \cite[Proposition 4.4.3]{Clo11a}. Let $Q$ denote the standard parabolic subgroup of $\GL_m$ with Levi subgroup $L = \GL_n \times \GL_n$, and let $I_\theta : \Pi_0 \to \Pi_0 \circ \theta_{\GL_m}$ denote Arthur's normalization of the intertwining operator, as described on \cite[p. 437]{labesse}: it is given by a formula
\[ I_\theta = \mathbf{M}_{Q | \theta(Q)} \rho_Q(\theta, 0, \theta). \]
The operator $\mathbf{M}_{Q | \theta(Q)}$ may be expressed, up to analytic continuation, as a product of local unnormalized intertwining operators. Following Shahidi, we can multiply by these local operators by a local factor (defined as in \cite[\S 3.3]{Mok15}) to obtain normalized intertwining operators. By \cite[Proposition 3.5.1]{Mok15}, these are Whittaker normalized (see also \cite[Theorem 2.5.3]{Art13}; note that $\tau_0$ is known to be tempered, by \cite{Clo13, Caraianilnotp}, so this already follows from the results of \cite{Sha90}). The global renormalized  intertwining operator obtained by analytic continuation is
\[ I_\theta' = \epsilon(0, \pi_1 \times \pi_2^\vee) L(\pi_1 \times \pi_2^\vee, 0) L(\pi_1^\vee \times \pi_2, 1) I_\theta = I_\theta, \]
since the Rankin--Selberg $L$-functions appearing here are entire (as $\pi_1$ is not isomorphic to a twist of $\pi_2$ by a power of the norm character, as both are unitary and their sum is regular algebraic) and the functional equation of the Rankin--Selberg $L$-function implies that the global renormalization factor in fact equals 1. This shows that $n(\widetilde{\tau}_0) = 1$. We have obtained an identity
\begin{equation}\label{eqn_simplified_trace_identity}
\begin{split} \sum_\sigma m(\sigma) \tr \sigma(f)  = \frac{1}{4}& \Big( \tr (\widetilde{\Pi}_0)(\widetilde{f}^{U^\ast_m}) + \tr (\widetilde{\Pi}_1)(\widetilde{f}^{U^\ast_m}) \\ & +  \tr \widetilde{\tau_0 \otimes \mu_n^{-1}}(\widetilde{f}^{U^\ast_n \times U^\ast_n}) + \tr \widetilde{\tau_1 \otimes \mu_n^{-1}}(\widetilde{f}^{U^\ast_n \times U^\ast_n})\Big). \end{split}
\end{equation}
To go further we need to introduce rigidifications; more precisely, a normalization of local transfer factors (and a corresponding factorization of e.g.\ $f^{U_n^\ast \times U_n^\ast}$ as a tensor product of associated local terms $f_v^{U_n^\ast \times U_n^\ast}$). Following \cite{Kal14}, we can do this by fixing the following data:
\begin{itemize}
\item A non-trivial additive character $\psi_F : \bA_F / F \to \bC^\times$. (Since $U_m^\ast$ has its standard splitting, it follows that $U_m^\ast(F_v)$ is equipped with a Whittaker datum for each place $v$ of $F$.)
\item A lift of $\xi$ of our fixed inner twist to an extended pure inner twist $(\xi, z)$ in the sense of \cite[\S 0.3]{Kal14}. We suppose $z$ chosen so that its invariants $a_v$, in the sense of \cite[\S 0.3.3]{Kal14}, are given as follows: if $v \in \Sigma$, then $a_v = 2$. If $v | \infty$, then $a_v = n \text{ mod }2$. Otherwise, $a_v = 0$.
\end{itemize}
These choices determine a normalization of local transfer factors satisfying the adelic product formula (see \cite[Proposition 1.1.3]{Kal14} or \cite[Proposition 4.3.2]{Kal19}). This in turn allows us to evaluate the right-hand side of (\ref{eqn_simplified_trace_identity}). More precisely, we first write each twisted trace as a product of local twisted traces. These local twisted traces can be evaluated in terms of stable traces on endoscopic groups of $U_m$, using \cite[Proposition 1.5.2]{Kal14} (a convenient restatement of the results of \cite{Mok15}). These stable traces can then in turned be evaluated in terms of traces on $U_m$ using the endoscopic character identities (thus using again \cite[Proposition 1.5.2]{Kal14} at quasi-split places and \cite[\S 1.6.3]{Kal14} or \cite[Theorem 4.5.1]{Kal19} at places $v \in \Sigma$ or $v | \infty$, respectively). We find that if $\sigma = \otimes'_v \sigma_v$ is the irreducible admissible representation of $U_m(\A_F)$ specified up to isomorphism by the following conditions:
\begin{itemize}
	\item If $v$ is a finite place of $F$ inert in $E$, then $\sigma_v$ is the element of the unramified $L$-packet corresponding to $\pi_v$ such that the character $\langle \sigma_v, \cdot \rangle$ given by \cite[Proposition 1.5.2]{Kal14} is the trivial character;
	\item $\mathrm{BC}(\sigma) = \pi$,
\end{itemize}
then $\sigma$ has multiplicity $m(\sigma) = 1$ as an automorphic representation of $U_m(\A_F)$ provided that the product 
\[ \prod_{v \in \Sigma} \langle \sigma_v, s_{U_n^\ast \times U_n^\ast} \rangle \cdot \prod_{v | \infty} \langle \sigma_v, s_{U_n^\ast \times U_n^\ast} \rangle \]
(these signs now defined by \cite[\S 1.6.3]{Kal14} and \cite[Theorem 4.5.1]{Kal19}) equals 1. We now come to the end of the proof. Indeed, let $F' / F$ be a $\Sigma$-split totally real quadratic extension, let $\psi_{F'} = \psi \circ \tr_{F' / F}$, and let $(\xi', z')$ be the extended pure inner twist $U_{m, F'}^\ast \to U_{m, F'}$ arising by restriction. Then the above analysis goes through as before for the group $U_{m, F'}$ and representation $\pi'$, base change of $\pi$ with respect to $F' / F$. If $\sigma'$ is the irreducible admissible representation of $U_m(\A_{F'})$ defined in the same way relative to $\pi'$ then the multiplicity of $\sigma'$ as an automorphic representation of $U_m(\A_{F'})$ equals 1, provided that the product
\[ \prod_{v \in \Sigma'} \langle \sigma'_v, s_{U_n^\ast \times U_n^\ast} \rangle \cdot \prod_{v | \infty} \langle \sigma'_v, s_{U_n^\ast \times U_n^\ast} \rangle = \left(  \prod_{v \in \Sigma} \langle \sigma_v, s_{U_n^\ast \times U_n^\ast} \rangle \cdot \prod_{v | \infty} \langle \sigma_v, s_{U_n^\ast \times U_n^\ast} \rangle \right)^2 \]
equals 1. Since it is the square of a sign, we're done. 

\section{Congruences between automorphic forms -- unitary group case}\label{sec_congruences_unitary_case}

Let $n \geq 1$ be an integer, and let $m = 2n$. Let $E / F$ be a CM quadratic extension of a totally real number field, and let $D$, $\Sigma$, $B$, $\widetilde{\Sigma}$, $U_m$ etc.\ be as in \S \ref{sec_endoscopic_classification_setup}.

The aim of this section is to prove the following theorem. We fix a prime number $l$, not dividing any element of $\Sigma$, and assume that the $l$-adic places $S_l$ of $F$ split in $E$.
\begin{thm}\label{thm_level_raising_for_unitary_groups}
	Fix an isomorphism $\iota : \overline{\bQ}_l \to \bC$. Let $\sigma$ be an automorphic representation of $U_m(\bA_F)$ with the following properties:
		\begin{enumerate}
			\item $\overline{r}_\iota(\sigma)$ is not isomorphic to a twist of $1 \oplus \epsilon^{-1} \oplus \dots \oplus \epsilon^{1-m}$.
			\item There exists $v_0 \in \Sigma$ and an isomorphism of representations of $\GL_2(D_{\wv_0})$
			\[ \sigma_{v_0} \circ \iota_{\wv_0}^{-1} \cong \nInd_{P_0}^{\GL_2(D_{\wv_0})} \chi_{\wv_0, 1} \circ N \otimes \chi_{\wv_0, 2} \circ N,\]
			 where $\chi_{\wv_0, 1}, \chi_{\wv_0, 2} : E^\times_{\wv_0}\to \bC^\times$ are unramified characters such that 
			 \[ \iota^{-1}(\chi_{\wv_0, 1}(\varpi_{\wv_0}) / \chi_{\wv_0, 2}(\varpi_{\wv_0})) \equiv q_{\wv_0}^{n} \text{ mod }\ffrm_{\overline{\bZ}_l}. \]
		\end{enumerate}
	Then there exists an automorphic representation $\sigma'$ of $U_m(\bA_F)$ with the following properties:
	\begin{enumerate}
		\item $\overline{r}_\iota(\sigma) \cong \overline{r}_\iota(\sigma')$.
		\item There is an isomorphism $\sigma'_{v_0} \circ \iota_{\wv_0}^{-1} \cong \St_{\GL_2(D_{\wv_0})}(\chi_{\wv_0})$, where $\chi_{\wv_0} : E_{\wv_0}^\times \to \bC^\times$ is an unramified character.
		\item $\sigma_\infty \cong \sigma'_\infty$. If $\mathrm{BC}(\sigma)$ is $\iota$-ordinary, then $\mathrm{BC}(\sigma')$ is $\iota$-ordinary.
		\item For each finite place $v$ of $F$ such that $\sigma_v$ is unramified and $v$ is inert in $E$, $\sigma'_v$ is unramified.
	\end{enumerate}
\end{thm}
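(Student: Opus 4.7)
The plan is to mimic Ribet's level-raising argument in the definite unitary setting, paralleling Diamond--Taylor's treatment for definite quaternion algebras. Choose a compact open subgroup $K^{v_0} \subset U_m(\bA_F^{v_0,\infty})$ for which $\sigma$ has a nonzero fixed vector and which is hyperspecial at every inert place where $\sigma_v$ is unramified, and let $W$ be the irreducible algebraic $\overline{\bZ}_l$-representation of $U_m(F \otimes_\bQ \bR)$ realizing $\sigma_\infty$ via $\iota$. Form the spaces $\mathcal{A}_\mathfrak{K}$ and $\mathcal{A}_\mathfrak{I}$ of $W$-valued algebraic automorphic forms on $U_m$ of levels $K^{v_0} \mathfrak{K}_{v_0}$ and $K^{v_0} \mathfrak{I}_{v_0}$ respectively; these are finitely generated free $\overline{\bZ}_l$-modules (because $U_m$ is definite) carrying compatible actions of an unramified Hecke algebra $\mathbf{T}$ and the $\mathbf{T}$-equivariant degeneracy map $d : \mathcal{A}_\mathfrak{K}^{\oplus 2} \to \mathcal{A}_\mathfrak{I}$, $d(f,g) = f + \eta \cdot g$. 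Let $\mathfrak{m} \subset \mathbf{T}$ be the maximal ideal corresponding to $\overline{r}_\iota(\sigma)$.

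The first step is to show that $d_\mathfrak{m}$ is injective. A pair $(f,g) \in \ker d$ satisfies $f = -\eta \cdot g$, so that $f$ is fixed by both $\mathfrak{K}_{v_0}$ and $\eta \mathfrak{K}_{v_0}\eta^{-1}$; by Lemma \ref{lem_kernel_of_d}, $f$ is then $\SL_2(D_{\wv_0})$-invariant and hence factors through the reduced norm, so its associated automorphic representation has one-dimensional component at $v_0$. By Theorem \ref{thm_unitary_group_base_change} together with the third part of Lemma \ref{lem_computation_of_LJ} and local-global compatibility (Corollary \ref{cor_existence_of_gal_over_Q_l_for_unitary_group}), the associated residual Galois representation would be an unramified twist of $1 \oplus \epsilon^{-1} \oplus \dots \oplus \epsilon^{1-m}$, contradicting hypothesis (1) on $\mathfrak{m}$. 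Hence $(\ker d)_\mathfrak{m} = 0$.

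The second step is to establish that $(\operatorname{coker} d)_\mathfrak{m} \otimes_{\overline{\bZ}_l} \overline{\bQ}_l \neq 0$, which, granted Lemma \ref{lem_characterizing_Steinberg} and the fact (from step one, via the same base-change argument) that no character components occur at $\mathfrak{m}$, immediately produces an automorphic $\sigma'$ with $\overline{r}_\iota(\sigma') \cong \overline{r}_\iota(\sigma)$ and $\sigma'_{v_0}$ an unramified twist of $\St_{\GL_2(D_{\wv_0})}$. The remaining properties of $\sigma'$ are built in: equality of archimedean components via the choice of $W$, unramifiedness at inert places via the hyperspecial choice of $K^{v_0}$, and the $\iota$-ordinary property by replacing $\mathcal{A}_\mathfrak{K}$, $\mathcal{A}_\mathfrak{I}$ by their Hida-ordinary summands, which are preserved by $d$. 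To produce non-vanishing cokernel, argue by contradiction: assume $d_\mathfrak{m}$ is rationally surjective, hence a rational isomorphism. Then every rational constituent of $\mathcal{A}_{\mathfrak{I},\mathfrak{m}}$ is an irreducible unramified principal series, and every contributing $\sigma'$ has residual Satake parameters $\{\chi_1',\chi_2'\}(\varpi_{\wv_0}) \bmod l$ equal to $\{\chi_1,\chi_2\}(\varpi_{\wv_0}) \bmod l$, so satisfies the same level-raising congruence. Consequently, by Lemma \ref{lem_ramified_Hecke_operators}, the operator $U = T_1^2 - T_2(q_{\wv_0}^n + 1)^2$ annihilates every spherical Hecke eigenline in $\mathcal{A}_{\mathfrak{K},\mathfrak{m}}$ modulo $l$; a standard integral-lifting argument using the rational isomorphism $d_\mathfrak{m}$ then produces a mod-$l$ Hecke eigensystem in $\mathcal{A}_{\mathfrak{I},\mathfrak{m}}$ on which the Iwahori analogues of $T_1,T_2$ do not lift to the assumed principal-series pattern, contradicting the assumed absence of Steinberg constituents.

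The hard part is the final contradiction in step two: converting the modular congruence (the mod-$l$ vanishing of $U$ on all contributing spherical eigenlines) into an actual new Hecke eigensystem at Iwahori level. This requires a careful compatibility between the spherical Hecke action on $\mathcal{A}_{\mathfrak{K}}$ and the Iwahori-Hecke action on $\mathcal{A}_{\mathfrak{I}}$ via $d$, and an integral snake-lemma argument applied to the resulting lattice inclusion. Once this is done the conclusion follows from Lemma \ref{lem_characterizing_Steinberg} as indicated.
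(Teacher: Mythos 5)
Your overall framework (algebraic modular forms on the definite group, the degeneracy map $d(f,g)=f+\eta\cdot g$, injectivity via Lemma \ref{lem_kernel_of_d}, and Lemma \ref{lem_characterizing_Steinberg} to recognise the Steinberg constituent) matches the paper, but the decisive step is missing. The paper derives ``$d_\ffrm$ is not an isomorphism'' from the classical Ribet/Taylor duality computation: using the pairing $\langle\cdot,\cdot\rangle_V$ one forms the adjoint $d^\vee_{\ffrm^\vee,k}$ and computes
\[ d_{\ffrm^\vee, k}^\vee \circ d_{\ffrm, k} = \left( \begin{array}{cc} q_{v_0}^n + 1 & T^1_{v_0} \\ T^1_{v_0} (T^2_{v_0})^{-1} & q_{v_0}^n + 1 \end{array}\right), \]
whose determinant is $(q^n_{v_0}+1)^2-(T^1_{v_0})^2(T^2_{v_0})^{-1}$; the congruence satisfied by $\sigma$ (via Lemma \ref{lem_ramified_Hecke_operators}) makes this non-invertible on $S(V,k)_\ffrm$, while mod-$l$ injectivity of $d$ and of its dual-side analogue would force it to be invertible if $d_{\ffrm,k}$ were an isomorphism. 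Your substitute for this --- ``every contributing constituent at Iwahori level is an unramified principal series whose Satake parameters mod $l$ satisfy the same congruence, and then an integral-lifting/snake-lemma argument gives a contradiction'' --- is exactly the hard point and is not established. Note in particular that $v_0\in\Sigma\subset T$, so the Hecke algebra $\bT^T(V,\cO)$ contains no operators at $v_0$ and the maximal ideal $\ffrm$ places no direct constraint on the $v_0$-parameters of other constituents; nor does Corollary \ref{cor_existence_of_gal_over_Q_l_for_unitary_group} give local-global compatibility at places of $\Sigma$, so you cannot read those parameters off $\overline{r}_\ffrm$ with the tools at hand. Even granting that claim, the congruence is only a necessary condition for level raising and does not by itself contradict rational surjectivity of $d_\ffrm$; some version of the adjoint computation (or an equivalent) is unavoidable, and you have not supplied it.

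A secondary problem is that your injectivity argument lives in characteristic zero, whereas what the duality (or the paper's saturation) argument needs is injectivity of $d$ modulo $l$: a mod-$l$ form killed by $d$ is $\SL_2(D_{\wv_0})$-invariant by Lemma \ref{lem_kernel_of_d}, but it need not come from a characteristic-zero automorphic representation with one-dimensional component at $v_0$, so you cannot dispose of it by base change. The paper handles this directly (Lemma \ref{lem_invariant_implies_non-generic}): strong approximation shows such a form factors through the reduced norm, and an explicit computation of its unramified Hecke eigenvalues together with Chebotarev identifies $\overline{r}_\ffrm$ as a twist of $1\oplus\epsilon^{-1}\oplus\dots\oplus\epsilon^{1-m}$, contradicting hypothesis (1). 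Also, your citation of part (3) of Lemma \ref{lem_computation_of_LJ} is misapplied there: that statement concerns representations whose image under $|LJ|$ is a twist of $\St_{\GL_2(D)}$, not a one-dimensional representation. Repairing your write-up essentially amounts to inserting the paper's Lemma \ref{lem_invariant_implies_non-generic} and the adjoint computation of Proposition \ref{prop_iharas_lemma}(3).
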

The proof of Theorem \ref{thm_level_raising_for_unitary_groups} uses algebraic modular forms \cite{MR1729443}. We now define these. Let $W_\infty$ be the irreducible algebraic representation of $(\Res_{F / \bQ} U_{m})_\bC$ such that $\sigma_\infty = W|_{U_m(F \otimes_\bbQ \bR)}$. Then $W_l = \iota^{-1} W_\infty$ is an algebraic representation of $(\Res_{F / \bQ} U_m)_{\overline{\bQ}_l}$, and therefore receives an action of $U_m(F \otimes_\bQ \bQ_l)$.

Let $\cA_\infty$ denote the space of automorphic forms on $U_m(\bA_F)$, and let $\cA_{l, W_l}$ denote the set of  functions $\varphi : U_m(F) \backslash U_m(\bA_F) / U_m(F \otimes_\bQ \bR) \to W_l^\vee$ such that for some open compact subgroup $V \subset U_m(\bA_F^\infty)$, $v_l \varphi(gv) =  \varphi(g)$ for all $g \in U_m(\bA_F)$, $v \in V$. The group $U_m(\bA_F^\infty)$ acts on $\cA_\infty$ by right translation and on $\cA_{l, W_l}$ by the formula $(g \cdot \varphi)(x) = g_l \varphi(x g)$.
\begin{lemma}
	There is a canonical isomorphism of semisimple admissible $\overline{\bQ}_l[U_m(\bA_F^\infty)]$-modules:
	\[ \Hom_{U_m(F \otimes_\bQ \bR)}(W_\infty, \cA_\infty) \otimes_{\bC, \iota^{-1}} \overline{\bQ}_l \cong \cA_{l, W_l}. \]
\end{lemma}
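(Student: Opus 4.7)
Because $U_m(F\otimes_\bQ \bR)$ is compact, both sides of the claimed isomorphism become accessible through Peter--Weyl arguments after trivialising the archimedean component of $U_m(F)\backslash U_m(\bA_F)$. I would prove the isomorphism at each finite level $V \subset U_m(\bA_F^\infty)$ and then take a direct limit. Fix representatives $t_1,\dots,t_h \in U_m(\bA_F^\infty)$ for the (finite) double coset space $U_m(F)\backslash U_m(\bA_F^\infty)/V$, and set $\Gamma_i := U_m(F)\cap t_i V t_i^{-1}$; each $\Gamma_i$ is finite since $U_m(F)$ embeds discretely in the compact group $U_m(F\otimes_\bQ \bR)$.

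For the left-hand side, the map $f\mapsto (k\mapsto f(t_i,k))_i$ identifies $\cA_\infty^V$ with $\bigoplus_i C^\infty(\Gamma_i\backslash U_m(F\otimes_\bQ \bR))$ as $U_m(F\otimes_\bQ \bR)$-modules, so applying $\Hom_{U_m(F\otimes_\bQ \bR)}(W_\infty,-)$ (invoking Frobenius reciprocity or Peter--Weyl for a compact group) yields a canonical isomorphism $\Hom_{U_m(F\otimes_\bQ \bR)}(W_\infty,\cA_\infty^V) \cong \bigoplus_i (W_\infty^\vee)^{\Gamma_i}$. For the right-hand side, I would first rewrite the paper's definition via the substitution $\tilde\varphi(g) := g_l\cdot\varphi(g,1)$, which exchanges the paper's left $U_m(F)$-invariance plus twist $v_l\varphi(gv)=\varphi(g)$ for the classical equivariance relation $\tilde\varphi(\gamma g v) = \gamma_l\cdot\tilde\varphi(g)$ for $\gamma \in U_m(F)$, $v \in V$. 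Such $\tilde\varphi$ is determined by its values at the $t_i$, giving $\cA_{l,W_l}^V \cong \bigoplus_i (W_l^\vee)^{\Gamma_i}$.

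The two decompositions are then matched using the canonical $\iota$-semilinear bijection $W_l\to W_\infty$ coming from the equality $W_l=\iota^{-1}W_\infty$. Since each $\Gamma_i\subset U_m(F)$ acts on $W_l$ via $U_m(F)\subset U_m(F\otimes_\bQ \bQ_l)$ and on $W_\infty$ via $U_m(F)\subset U_m(F\otimes_\bQ \bR)$ by the \emph{same} underlying algebraic operators evaluated at rational points, this bijection is $\Gamma_i$-equivariant, so it induces $(W_\infty^\vee)^{\Gamma_i}\otimes_{\bC,\iota^{-1}}\overline{\bQ}_l \cong (W_l^\vee)^{\Gamma_i}$. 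Summing and passing to the direct limit over $V$ gives the desired isomorphism, which is $U_m(\bA_F^\infty)$-equivariant by construction (the $l$-adic twist in the paper's action on $\cA_{l,W_l}$ is exactly what the substitution $\tilde\varphi(g)=g_l\cdot\varphi(g,1)$ absorbs). The main obstacle is purely bookkeeping: carefully matching the paper's slightly unusual formulation (where the left $U_m(F)$-action on $\cA_{l,W_l}$ is literally trivial while the $U_m(\bA_F^\infty)$-action carries an extra $l$-adic twist) with the classical algebraic modular forms picture, after which the rest is formal.
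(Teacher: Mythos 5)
Your proposal is correct in outline, but it proceeds differently from the paper. The paper constructs the isomorphism directly at the level of functions, with no choice of coset representatives: given $\Phi \in \Hom_{U_m(F \otimes_\bQ \bR)}(W_\infty, \cA_\infty)$ it defines $\varphi(g)(w) = g_l^{-1} \iota^{-1} g_\infty \Phi(w)(g)$, writes down the analogous inverse formula, and notes that $U_m(\bA_F^\infty)$-equivariance is then a direct check. You instead work at each finite level $V$: Frobenius reciprocity identifies $\Hom_{U_m(F\otimes_\bQ\bR)}(W_\infty,\cA_\infty^V)$ with $\bigoplus_i (W_\infty^\vee)^{\Gamma_i}$, the substitution $\tilde\varphi(g)=g_l\cdot\varphi(g)$ identifies $\cA_{l,W_l}^V$ with $\bigoplus_i (W_l^\vee)^{\Gamma_i}$, and the two are matched by the $\iota$-semilinear identification $W_l=\iota^{-1}W_\infty$, which is indeed equivariant for the finite groups $\Gamma_i$ because rational points of $U_m$ act through the same algebraic representation on both sides. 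What your route buys is that admissibility is immediate (each $V$-invariant space is visibly finite-dimensional), and you essentially pre-prove the coset description of $S(V,R)$ that the paper only establishes later in Lemma \ref{lem_sufficiently_small}. What it costs is an extra verification you gloss over with ``by construction'': the level-$V$ isomorphism is built from a choice of representatives $t_i$, and to glue these into a single $U_m(\bA_F^\infty)$-equivariant isomorphism (the group action moves $V$-invariants to $gVg^{-1}$-invariants) you must check independence of the $t_i$. This does work --- replacing $t_i$ by $\gamma t_i v$ changes the two coordinates by the archimedean, respectively $l$-adic, action of the rational point $\gamma$, and these correspond under the semilinear bijection --- but once that check is unwound you have essentially rewritten the paper's explicit formula, so the two arguments converge.

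One genuine omission: the lemma asserts the modules are \emph{semisimple}, and your proposal never addresses this; it does not follow formally from the finite-level decomposition or from admissibility. The paper disposes of it in one line, using that $\Hom_{U_m(F \otimes_\bQ \bR)}(W_\infty, \cA_\infty)$ is admissible and unitary (the compact quotient carries an invariant inner product), and you should include such a remark.
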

\begin{proof}
	Given $\Phi \in \Hom_{U_m(F \otimes_\bQ \bR)}(W_\infty, \cA_\infty)$, we define $\varphi \in \cA_{l, W_l}$ by 
	\[ \varphi(g)(w) = g_l^{-1} \iota^{-1} g_\infty \Phi(w)(g), \]
	 Conversely, given $\varphi \in \cA_{l, W_l}$, we define $\Phi$ by 
	 \[ \Phi(w)(g) = g_\infty \iota g_l^{-1} \varphi(g)(w). \]
	  It is easy to check that this defines an isomorphism of admissible $\overline{\bQ}_l[U_m(\bA_F^\infty)]$-modules. They are semisimple because $\Hom_{U_m(F \otimes_\bQ \bR)}(W_\infty, \cA_\infty)$ is semisimple (being admissible and unitary).
\end{proof}
Let $K / \bQ_l$ be a finite extension of $\bQ_l$ inside $\overline{\bQ}_l$ containing the image of each embedding $F \to \overline{\bQ}_l$, and let $\cO = \cO_K$, $\lambda = \ffrm_K$, $k = \cO  /\lambda$. Fix for each $v \in S_l$ a factorization $v = \wv \wv^c$ in $E$. Then $W_l$ can be defined over $K$, and we write $M_l \subset W_l$ for a fixed choice of $\cO$-lattice, invariant under the action of the compact subgroup $V_{l, 0} = \prod_{v \in S_l} \iota_\wv^{-1}(\GL_n(\cO_{E_\wv})) \subset U_m(F \otimes_\bQ \bQ_l)$. Let $M_l^\vee = \Hom_\cO(M_l, \cO)$. If $R$ is any $\cO$-algebra, then we let $\cA_{l, M_l, R}$ denote the set of functions 
\[ \varphi : U_m(F) \backslash U_m(\bA_F) / U_m(F \otimes_\bQ \bR) \to M_l^\vee \otimes_\cO R \]
such that for some open compact subgroup $V = V^l V_l \subset U_m(\bA_F^\infty)$ with $V_l \subset V_{l, 0}$, we have $v_l\varphi (gv) =  \varphi(g)$ for all $v \in V$. Then $\cA_{l, M_l, R}$ is an admissible $R[U_m(\bA_F^{\infty, l}) \times V_{l, 0}]$-module. 

Finally, if $V = V^l \times V_l \subset U_m(\bA_F^\infty)$ is any open compact subgroup with $V_l \subset V_{l, 0}$, we define $S(V, R) = \cA_{l, M_l, R}^{V}$. It is a module for the convolution algebra $\cH(U_m(\bA_F^{\infty, l}) \times V_l, V)$ of $V$-biinvariant, compactly supported functions $f : U_m(\bA_F^{\infty, l}) \times V_l \to \bZ$ (defined with respect to the Haar measure giving $V$ measure 1). Henceforth we only consider those choices of $V = V^l V_l$ with $V_l \subset V_{l, 0}$, so we do not make this requirement explicit.
\begin{lemma}\label{lem_sufficiently_small}
\begin{enumerate}
	\item If $R$ is Noetherian then $S(V, R)$ is a finite $R$-module.
	\item Suppose that for each $g \in U_m(\bA_F^\infty)$, the group $g V g^{-1} \cap U_m(F)$ (intersection in $U_m(\bA_F^\infty)$) is trivial. \textup{(}In this case, we say that $V$ is \emph{sufficiently small}.\textup{)} Then there is a canonical isomorphism
	\[ S(V, \cO) \otimes_\cO R \to S(V, R). \]
\end{enumerate}
\end{lemma}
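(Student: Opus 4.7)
The plan is to reduce both statements to an explicit description of $S(V,R)$ as a finite direct sum over a set of double-coset representatives, with summands given by invariants of a finite group acting on $M_l^\vee \otimes_\cO R$.

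First I would use the fact that $U_m$ is anisotropic over $F$ (being definite at all infinite places and an inner form of $U_m^\ast$, an algebraic group, elsewhere) to deduce that $U_m(F)$ is discrete in $U_m(\bA_F^\infty) \times U_m(F \otimes_\bQ \bR)$. Combined with the compactness of $V$ and $U_m(F \otimes_\bQ \bR)$, this has two standard consequences: the double coset space $U_m(F) \backslash U_m(\bA_F^\infty) / V$ is finite, and for every $g \in U_m(\bA_F^\infty)$, the subgroup $\Gamma_g := g V g^{-1} \cap U_m(F)$ is finite. Fix coset representatives $g_1, \dots, g_h$ and set $\Gamma_i = \Gamma_{g_i}$.

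Using that $U_m(F \otimes_\bQ \bR)$ acts trivially on $M_l^\vee \otimes_\cO R$ (the archimedean part of the center is irrelevant since functions descend to the double-coset quotient by $U_m(F \otimes_\bQ \bR)$), the evaluation map $\varphi \mapsto (\varphi(g_i))_{i=1}^h$ yields an isomorphism of $R$-modules
\[
S(V,R) \;\stackrel{\sim}{\longrightarrow}\; \bigoplus_{i=1}^{h} (M_l^\vee \otimes_\cO R)^{\Gamma_i},
\]
where $\Gamma_i$ acts on $M_l^\vee \otimes_\cO R$ via its inclusion into $V \to V_l \to \mathrm{Aut}_\cO(M_l^\vee)$ (the $l$-component being the only one that acts non-trivially on $M_l^\vee$).

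Part (1) is then immediate: $M_l^\vee$ is a finite free $\cO$-module, so each summand $(M_l^\vee \otimes_\cO R)^{\Gamma_i}$ is an $R$-submodule of a finitely generated $R$-module, hence finitely generated when $R$ is Noetherian. For part (2), the sufficiently-small hypothesis says precisely that $\Gamma_i$ is trivial for every choice of $g$, so each invariant term reduces to $M_l^\vee \otimes_\cO R$, and the above isomorphism becomes
\[
S(V,R) \;\cong\; \bigoplus_{i=1}^h M_l^\vee \otimes_\cO R \;\cong\; \Bigl(\bigoplus_{i=1}^h M_l^\vee\Bigr) \otimes_\cO R \;\cong\; S(V,\cO) \otimes_\cO R,
\]
the last isomorphism using that $S(V,\cO) = \bigoplus_i M_l^\vee$ by the same description with $R = \cO$; one then checks the composition is the canonical map. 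There is no real obstacle here; the only care needed is in setting up the equivariance conventions (i.e.\ that the $V_l$-action on values, not just the translation action, is what gives the $\Gamma_i$-invariance condition), which is essentially bookkeeping.
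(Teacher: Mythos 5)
Your proof is correct and is essentially the paper's own argument: both rest on the evaluation isomorphism $S(V,R)\cong\oplus_i (M_l^\vee\otimes_\cO R)^{\Gamma_i}$ over representatives of the finite double quotient $U_m(F)\backslash U_m(\bA_F^\infty)/V$, with (1) following from Noetherianity and (2) from triviality of the stabilizers. The only (harmless) difference is a conjugation convention: your $\Gamma_i=g_iVg_i^{-1}\cap U_m(F)$ acts on $M_l^\vee$ only after conjugating back into $V$ by $g_i$, which is the bookkeeping you already flagged.
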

\begin{proof}
	Let $g_1, \dots, g_k$ be a set of representatives for the finite double quotient $U_m(F) \backslash U_m(\bA_F^\infty) / V$. Let $\Gamma_{g_i, V} = g_i^{-1} U_m(F) g_i \cap V$ (intersection in $U_m(\bA_F^\infty))$. Then there is an isomorphism 
	\[  \begin{split} S(V, R) & \cong \oplus_{i=1}^k (M_l^\vee \otimes_\cO R)^{\Gamma_{g_i, V}} \\
	 \varphi & \mapsto (\varphi(g_i))_{i=1}^k. \end{split}  \]
	 If $R$ is Noetherian then the right-hand side is a finitely generated $R$-module. If $V$ is sufficiently small then the right-hand side is a free $R$-module and the canonical map $S(V, \cO) \otimes_\cO R \to S(V, R)$ is an isomorphism.
\end{proof}
We define Hecke algebras using unramified Hecke operators. More precisely, let $V = \prod_v V_v$ be a fixed open compact subgroup of $U_m(\bA_F^\infty)$, and let $T$ be a finite set of places of $F$ satisfying the following conditions:
\begin{itemize}
	\item $T$ contains the infinite places, $S_l$, and $\Sigma$.
	\item For each place $v \not\in T$ of $F$, $V_v$ is a hyperspecial maximal compact subgroup of $U_m(F_v)$.
\end{itemize}
Then we write $\bT^T(V, R)$ for the subalgebra of $\End_R(S(V, R))$ generated by the images of the convolution algebras $\cH(U_m(F_v), V_v)$ for each place $v \not\in T$ of $F$. In particular, if $v = w w^c$ splits in $E$ and $v \not \in T$ then $\bT^T(V, R)$ contains the standard unramified Hecke operators $T_w^1, \dots, T_w^m$, defined as follows: first, we may choose the isomorphism $\iota_w : U_m(F_v) \to \GL_m(E_w)$, defined a priori up to $\GL_m(E_w)$-conjugacy, so that it takes $V_v$ to $\GL_m(\cO_{E_w})$. Then $T_w^i$ is defined to be the Hecke operator which corresponds, under $\iota_w$, to the operator $[\GL_m(\cO_{E_w}) \diag(\varpi_w, \dots, \varpi_w, 1, \dots, 1) \GL_m(\cO_{E_w})]$ with $i$ occurrences of $\varpi_w$ on the diagonal. It is independent of the choice of $\iota_w$. Lemma \ref{lem_sufficiently_small} has the following corollary.
\begin{corollary}
	Let $V = \prod_v V_v$ be a sufficiently small open compact subgroup of $U_m(\bA_F^\infty)$. Then there is a surjective homomorphism $\bT^T(V, \cO) \otimes_\cO k\to \bT^T(V, k)$, which induces a bijection on maximal ideals.
\end{corollary}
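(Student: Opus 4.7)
The plan is to use the sufficient smallness hypothesis to reduce the corollary to a formal statement about a finite $\cO$-subalgebra of an endomorphism algebra acting on a finite free $\cO$-module, and then to invoke the idempotent decomposition of a semilocal ring. Set $M = S(V, \cO)$. By the sufficient smallness hypothesis and Lemma \ref{lem_sufficiently_small}, $M$ is a finite free $\cO$-module and the natural map $M \otimes_\cO k \to S(V, k)$ is an isomorphism. In particular, the inclusion $\bT^T(V, \cO) \subset \End_\cO(M)$ remains an inclusion after tensoring with $k$, and the image of $\bT^T(V, \cO) \otimes_\cO k$ in $\End_k(M \otimes_\cO k)$ is, by the definition of both Hecke algebras in terms of the action of the same unramified Hecke operators, exactly $\bT^T(V, k)$. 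This gives the surjection $\bT^T(V, \cO) \otimes_\cO k \twoheadrightarrow \bT^T(V, k)$.

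For the bijection on maximal ideals, I would first observe that $\bT^T(V, \cO)$ is a commutative $\cO$-algebra which is finitely generated as an $\cO$-module (since it sits inside $\End_\cO(M)$, a finite free $\cO$-module), hence semilocal; every maximal ideal of $\bT^T(V, \cO)$ contains $\lambda$, so maximal ideals of $\bT^T(V, \cO)$ are in bijection with maximal ideals of $\bT^T(V, \cO) \otimes_\cO k$. Writing $\bT^T(V, \cO) = \prod_\ffrm \bT^T(V, \cO)_\ffrm$ by orthogonal idempotents $e_\ffrm$, we get a corresponding decomposition $M = \bigoplus_\ffrm e_\ffrm M$ of $\bT^T(V, \cO)$-modules. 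Since $\bT^T(V, \cO)$ acts faithfully on $M$, each $e_\ffrm M$ is nonzero and is a finite free $\cO$-module (as a direct summand of $M$), on which the local ring $\bT^T(V, \cO)_\ffrm$ acts faithfully.

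Tensoring with $k$ and using that each $e_\ffrm M$ is free, the decomposition becomes $M \otimes_\cO k = \bigoplus_\ffrm (e_\ffrm M \otimes_\cO k)$ with every summand nonzero by Nakayama. Consequently the image $\bT^T_\ffrm$ of the local Artinian $k$-algebra $\bT^T(V, \cO)_\ffrm \otimes_\cO k$ inside $\End_k(e_\ffrm M \otimes_\cO k)$ is nonzero (it contains the identity), so it is itself local Artinian with the same residue field as $\bT^T(V, \cO)_\ffrm \otimes_\cO k$. Putting the pieces together, $\bT^T(V, k) = \prod_\ffrm \bT^T_\ffrm$ is a product of local Artinian $k$-algebras indexed by the maximal ideals of $\bT^T(V, \cO)$, and the surjection $\bT^T(V, \cO) \otimes_\cO k \twoheadrightarrow \bT^T(V, k)$ factors through each idempotent component, proving the claimed bijection.

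The only step that requires genuine care is verifying that no maximal ideal of $\bT^T(V, \cO) \otimes_\cO k$ is killed by the surjection to $\bT^T(V, k)$ — equivalently, that each local factor has nontrivial image. That is handled by the faithfulness of the $\bT^T(V, \cO)_\ffrm$-action on $e_\ffrm M$ combined with Nakayama; essentially every other input is formal.
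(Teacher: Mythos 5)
Your opening paragraph contains a genuine error: it is not true in general that the inclusion $\bT^T(V, \cO) \hookrightarrow \End_\cO(M)$ remains an inclusion after applying $- \otimes_\cO k$. Tensoring with $k$ is only right-exact, and injectivity would require the image of $\bT^T(V, \cO)$ to be a \emph{saturated} $\cO$-submodule of $\End_\cO(M)$, which there is no reason to expect. Worse, if that claim were correct then the surjection $\bT^T(V, \cO) \otimes_\cO k \to \bT^T(V, k)$ would automatically be an isomorphism and the corollary would be vacuous; the whole content of the statement is that this map may have a nontrivial kernel, which one must show lies in the nilradical.

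Fortunately you never actually use the injectivity claim: the surjectivity, which you deduce correctly from the identification $S(V, \cO) \otimes_\cO k \cong S(V, k)$ of Lemma \ref{lem_sufficiently_small} and the compatibility of the Hecke action, is all that the rest of the proof needs. The remainder of the argument --- decomposing $\bT^T(V, \cO)$ as a product of complete local factors (legitimate because it is finite over the complete local ring $\cO$, so idempotents lift; semilocality on its own would not give such a splitting), splitting $M$ accordingly, and using faithfulness together with Nakayama to see that each local factor contributes a nonzero local Artinian quotient of $\bT^T(V, k)$ with the right residue field --- is correct, and is a valid if somewhat more elaborate route to the conclusion. The more direct argument observes that any $t \in \bT^T(V, \cO)$ whose image in $\bT^T(V, k)$ vanishes satisfies $tM \subset \lambda M$, so its characteristic polynomial on the free $\cO$-module $M$ reduces modulo $\lambda$ to $X^d$ where $d$ is the $\cO$-rank of $M$; by Cayley--Hamilton and the faithfulness of the action, $t^d \in \lambda\,\bT^T(V, \cO)$, so the kernel of the surjection lies in the nilradical and the bijection on maximal ideals is immediate. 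Your idempotent approach reaches the same place but carries the module decomposition along the way. Simply delete the erroneous injectivity sentence and the rest of what you wrote stands.
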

\begin{prop}
	For each maximal ideal $\ffrm \subset \bT^T(V, \cO)$, there is an associated Galois representation $\overline{r}_\ffrm : G_E \to \GL_m( \bT^T(V, \cO) / \ffrm)$, uniquely determined up to isomorphism by the following conditions:
	\begin{enumerate}
		\item $\overline{r}_\ffrm$ is semisimple.
		\item For each place $w$ of $E$ not lying above a place of $T$, $\overline{r}_\ffrm|_{G_{E_w}}$ is unramified.
		\item For each place $v = w w^c$ of $F$ split in $E$ such that $v \not\in T$, the characteristic polynomial $\det(X - \overline{r}_\ffrm(\Frob_w))$ equals the image of \[ X^m - T_w^1 X^{m-1} \dots + (-1)^j q_w^{j(j-1)/2} T_w^j + \dots + (-1)^n q_w^{m(m-1)/2} T_w^m. \]
		in $(\T^T(V, \cO)/\ffrm)[X]$.
	\end{enumerate}
\end{prop}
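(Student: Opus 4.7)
The plan is to construct $\overline{r}_\ffrm$ as the semisimple Galois representation attached to the reduction modulo $\ffrm$ of a pseudocharacter on $G_E$ that interpolates, across the characteristic-zero points of $\bT^T(V, \cO)$, the Galois representations furnished by Corollary \ref{cor_existence_of_gal_over_Q_l_for_unitary_group}. Once the pseudocharacter is in hand, existence and uniqueness of $\overline{r}_\ffrm$ will follow from standard results on pseudorepresentations over a field, and conditions (1)--(3) will then be read off directly.

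First I would set up the interpolation. By Lemma \ref{lem_sufficiently_small}, $S(V, \cO)$ is a finitely generated $\cO$-module, hence $\bT^T(V, \cO)$ is a finite, $\cO$-flat $\cO$-algebra. Its generic fibre $\bT^T(V, \cO) \otimes_\cO \overline{\bQ}_\ell$ is semisimple (since $S(V, \cO) \otimes_\cO \overline{\bQ}_\ell$ is a semisimple $\bT^T(V, \cO)$-module coming from unitary admissible automorphic representations), and so decomposes as a product $\prod_\lambda \overline{\bQ}_\ell$ indexed by $\cO$-algebra homomorphisms $\lambda : \bT^T(V, \cO) \to \overline{\bQ}_\ell$. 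Each such $\lambda$ corresponds to a system of Hecke eigenvalues occurring in $S(V, \cO) \otimes_\cO \overline{\bQ}_\ell$, which via the comparison with classical automorphic forms arises from an automorphic representation $\sigma_\lambda$ of $U_m(\bA_F)$ with $\sigma_{\lambda, \infty} \cong W_\infty$ and $(\sigma_\lambda^\infty)^V \neq 0$. Corollary \ref{cor_existence_of_gal_over_Q_l_for_unitary_group} then attaches to each $\sigma_\lambda$ a continuous semisimple representation $r_\iota(\sigma_\lambda) : G_E \to \GL_m(\overline{\bQ}_\ell)$.

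Next I would assemble these into the product pseudocharacter
\[ t := \prod_\lambda \tr r_\iota(\sigma_\lambda) : G_E \longrightarrow \bT^T(V, \cO) \otimes_\cO \overline{\bQ}_\ell. \]
For each place $v = w w^c$ of $F$ split in $E$ and not in $T$, part (2) of Corollary \ref{cor_existence_of_gal_over_Q_l_for_unitary_group}, combined with the Satake parametrization of the unramified principal series at $w$ and the definition of the operators $T_w^j$, shows that the characteristic polynomial of $t(\Frob_w)$ is the image of
\[ X^m - T_w^1 X^{m-1} + \cdots + (-1)^j q_w^{j(j-1)/2} T_w^j X^{m-j} + \cdots + (-1)^m q_w^{m(m-1)/2} T_w^m \]
in $\left( \bT^T(V, \cO) \otimes_\cO \overline{\bQ}_\ell \right)[X]$. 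The elements $\Frob_w$ at such split unramified places form a dense subset of $G_E$ by Chebotarev, so the continuity of $t$ together with the closedness of the finitely generated $\cO$-submodule $\bT^T(V, \cO)$ inside its generic fibre forces $t$ to take values in $\bT^T(V, \cO)$ itself.

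Finally I would reduce modulo $\ffrm$. The resulting continuous $m$-dimensional pseudocharacter $\bar t : G_E \to \bT^T(V, \cO)/\ffrm$ takes values in a field of characteristic $\ell$, so by the standard theory of pseudorepresentations over a field there is a unique (up to isomorphism) continuous semisimple representation $\overline{r}_\ffrm : G_E \to \GL_m(\bT^T(V, \cO)/\ffrm)$ (possibly after enlarging to a finite extension of the residue field, after which descent is automatic by Galois invariance) whose trace equals $\bar t$. Conditions (1)--(3) of the proposition are then immediate: semisimplicity is built in, unramifiedness of $\overline{r}_\ffrm|_{G_{E_w}}$ outside $T$ is inherited from each $r_\iota(\sigma_\lambda)$, and the characteristic-polynomial identity at split unramified places is inherited from the analogous identity for $t$ via reduction; uniqueness follows from Brauer--Nesbitt applied to $\bar t$. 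The only non-routine step in the whole argument is the descent of $t$ to $\bT^T(V, \cO)$-valued coefficients, but this is a standard Chebotarev-plus-closedness argument rather than a genuine obstacle.
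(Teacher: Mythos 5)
Your proof is correct and follows essentially the same route as the paper, which simply cites Corollary 3.1.2 of \cite{cht} — that reference is precisely the pseudocharacter interpolation-and-reduction argument you have unwound here, applied together with Corollary \ref{cor_existence_of_gal_over_Q_l_for_unitary_group} and the $\cO$-flatness of $\bT^T(V,\cO)$.
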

\begin{proof}
This follows from Corollary \ref{cor_existence_of_gal_over_Q_l_for_unitary_group} , \cite[Corollary 3.1.2]{cht}, and the fact that $\bT^T(V, \cO)$ is $\cO$-flat.
\end{proof}
In the statement of the next result, we observe that there is a direct sum decomposition $S(V, k) = \oplus_\ffrm S(V, k)_\ffrm$, the index set being the set of maximal ideals of $\bT^T(V, k)$. We call the $\ffrm$-component of any $f \in S(V, k)$ the image of $f$ under the projection to the factor $S(V, k)_\ffrm$.
\begin{lemma}\label{lem_invariant_implies_non-generic}
	Let $U_m^{der} \subset U_m$ denote the derived subgroup, and let $f \in S(V, k) \subset \cA_{l, M_l, k}$ be invariant under the action of $U_m^{der}(F_{v_0})$, for some $v_0 \in \Sigma$. Then for any maximal ideal $\ffrm \subset \bT^T(V, \cO)$ such that the $\ffrm$-component of $f$ is non-zero, $\overline{r}_\ffrm$ is isomorphic to a twist of $1 \oplus \epsilon^{-1} \oplus \dots \oplus \epsilon^{1-m}$.
\end{lemma}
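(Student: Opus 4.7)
\medskip

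\textbf{Proof plan for Lemma \ref{lem_invariant_implies_non-generic}.}

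The plan is to exploit strong approximation for the simply connected semisimple group $U_m^{der}$, lift to characteristic zero, and then invoke base change plus the classification of discrete automorphic representations of $\GL_m(\A_E)$. Under $\iota_{\wv_0}$ we have $U_m^{der}(F_{v_0}) \cong \SL_2(D_{\wv_0})$, which is noncompact, so Kneser's strong approximation theorem tells us that $U_m^{der}(F) \cdot U_m^{der}(F_{v_0})$ is dense in $U_m^{der}(\bA_F^\infty)$. The first step is to argue that, combined with the right $V$-invariance of $f$, the hypothesis that $f$ is right $U_m^{der}(F_{v_0})$-invariant forces its Hecke eigensystem $\lambda_{\ffrm}$ to factor through the quotient $\cH(U_m(\bA_F^{\infty,l}),V^l) \twoheadrightarrow \cH(U_m^{ab}(\bA_F^{\infty,l}), \bar V^l)$ at every prime-to-$l$ place (the Hecke operators at such places commute with right translation by $U_m^{der}(F_{v_0})$ and are given by convolution, so one only needs to track values modulo $U_m^{der}$). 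In other words, $\lambda_{\ffrm}$ is an ``abelian'' (i.e.\ $U_1$-type) eigensystem at every unramified finite place different from $v_0$.

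Next I would lift to characteristic zero. Since $S(V, \cO)$ is finite free over $\cO$ for $V$ sufficiently small (Lemma \ref{lem_sufficiently_small}), and the $U_m^{der}(F_{v_0})$-invariants form an $\cO$-direct summand, the Deligne--Serre argument produces a characteristic-zero automorphic representation $\sigma$ of $U_m(\bA_F)$ contributing to $S(V, \cO)^{U_m^{der}(F_{v_0})}$ and whose reduction of Hecke eigensystem is $\lambda_{\ffrm}$. Irreducibility of $\sigma_{v_0}$ together with $\sigma_{v_0}^{\SL_2(D_{\wv_0})} \neq 0$ forces $\sigma_{v_0} \circ \iota_{\wv_0}^{-1} \cong \chi \circ \det$ for some unramified character $\chi$ of $E_{\wv_0}^\times$: the $\SL_2(D_{\wv_0})$-fixed subspace is $\GL_2(D_{\wv_0})$-invariant, hence either zero or the whole representation, and in the latter case $\sigma_{v_0}$ factors through $\GL_2(D_{\wv_0})/\SL_2(D_{\wv_0}) \cong E_{\wv_0}^\times$.

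Now I would apply Theorem \ref{thm_unitary_group_base_change}: $\operatorname{BC}(\sigma) = \pi_1 \boxplus \cdots \boxplus \pi_k$ with each $\pi_i$ a conjugate self-dual discrete automorphic representation of some $\GL_{n_i}(\bA_E)$. At the split place $\wv_0$ the local component $\operatorname{BC}(\sigma)_{\wv_0}$ has the same Satake parameter as $\chi \circ \det$, namely the arithmetic progression $\{\chi(\varpi_{\wv_0}) q_{\wv_0}^{(m-1)/2 - j}\}_{j=0}^{m-1}$. By Moeglin--Waldspurger's description $\pi_i \cong \mu_i |\cdot|^{(b_i-1)/2} \boxplus \cdots \boxplus \mu_i |\cdot|^{(1-b_i)/2}$ with $\mu_i$ cuspidal on $\GL_{a_i}$, the Satake parameter of $\pi_{i, \wv_0}$ is a consecutive arithmetic block of this progression, which is only possible if $a_i = 1$ for every $i$, so each $\pi_i$ is an isobaric sum of unitary Hecke characters. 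Via the construction in Corollary \ref{cor_existence_of_gal_over_Q_l_for_unitary_group} this means $r_\iota(\sigma)$ is a direct sum of characters of the form $\psi_j \epsilon^{a_j}$; but the Satake parameters globally force all the $\psi_j$ to coincide up to integer cyclotomic twist and the exponents to form the set $\{0, -1, \ldots, 1-m\}$. Reducing mod $\ell$ gives $\overline{r}_{\ffrm} \cong \overline{\psi} \otimes (1 \oplus \epsilon^{-1} \oplus \cdots \oplus \epsilon^{1-m})$, as required.

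The main obstacle will be Step 1: the right action of $U_m(\bA_F^\infty)$ on $\cA_{l, M_l, k}$ is twisted by the action of the $l$-adic component on $M_l^\vee$, so the naive strong-approximation argument produces a residual factor $v^*_l \in V_{l,0}$ whose action on $f(g)$ must be shown to be inert. The cleanest resolution is to bypass pointwise equality of $f$ and instead formulate the conclusion on the level of Hecke eigenvalues: the Hecke operators at prime-to-$l$, prime-to-$v_0$ places commute with right translation and are defined by functions supported away from $l$, so strong approximation transfers a statement about values of $f$ into a statement about eigenvalues, which is all that is needed to conclude that $\lambda_\ffrm$ is of abelian type.
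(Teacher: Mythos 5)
Your overall strategy (lift to characteristic zero, apply Theorem \ref{thm_unitary_group_base_change}, classify the discrete spectrum) is genuinely different from the paper's, which works entirely mod $\ell$: the paper shows, via strong approximation plus a Galois-cohomology argument that $\det : U_m(F) \to U_1^\ast(F)$ is surjective, that the \emph{function} $f$ itself factors through the reduced-norm map to a finite abelian group; it then computes the eigenvalues of the $T_w^i$ directly by counting double cosets (giving $q$-binomial coefficients times powers of $\psi(\varpi_w)$), and finishes with Chebotarev. No lift to characteristic zero, no endoscopic classification, is required.

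The decisive gap in your argument is the step ``the Satake parameter of $\pi_{i,\wv_0}$ is a consecutive arithmetic block of this progression, which is only possible if $a_i = 1$.'' This inference is false: the Satake parameter at the single place $\wv_0$ does not constrain $a_i$. For instance with $m=4$, take $\mu$ a conjugate self-dual cuspidal automorphic representation of $\GL_2(\bA_E)$ whose local component at $\wv_0$ is the unramified principal series with Satake parameter $\{q_{\wv_0}^{-1}, q_{\wv_0}\}$; then the discrete representation $\mu|\cdot|^{1/2}\boxplus\mu|\cdot|^{-1/2}$ of $\GL_4(\bA_E)$ has $a=2$, $b=2$, yet its Satake parameter at $\wv_0$ is exactly the arithmetic progression $\{q_{\wv_0}^{3/2}, q_{\wv_0}^{1/2}, q_{\wv_0}^{-1/2}, q_{\wv_0}^{-3/2}\}$. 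To rule this out you would need the ``abelian-type'' constraint on the eigensystem at \emph{all} unramified places, not just $\wv_0$ --- but at that point Chebotarev gives the conclusion directly mod $\ell$, and the lift to characteristic zero and the appeal to Moeglin--Waldspurger become superfluous. Separately, your lifting step asserts without justification that the $U_m^{der}(F_{v_0})$-invariants of $S(V,\cO)$ form an $\cO$-direct summand whose reduction recovers $S(V,k)^{U_m^{der}(F_{v_0})}$; while this can be arranged (after shrinking $V$ so that $V_l$ acts trivially on $M_l\otimes_\cO k$, the invariants become functions on a finite set), it is not automatic and is left unaddressed. I would recommend abandoning the lift and arguing in residue characteristic throughout, as the paper does.
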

\begin{proof}
	After modifying $f$, we can assume that $f \in S(V, k)_\ffrm$. After shrinking $V$, we can assume that $V_l$ acts trivially on $M_l \otimes_\cO k$. Then there is an isomorphism $\cA_{l, M_l, k} \cong \cA_{l, \cO, k}^{\dim_k M_l \otimes_\cO k}$, equivariant for the action of the group $U_m(\bA_F^{\infty, l}) \times V_l$. We can therefore assume that $M_l = \cO$ is the trivial representation, in which case we can think of $f$ as a locally constant function $f : U_m(F) \backslash U_m(\bA_F^\infty) / U^{der}_m(F_{v_0}) V^{v_0} \to k$. 
	
	The strong approximation theorem implies that $U_m^{der}(F)$ is dense in $U_m^{der}(\bA_{F}^{\infty, v_0})$. Since the maps $H^1(F, U_m^{der}) \to \prod_{v | \infty} H^1(F_v, U_m^{der})$ and (if $v | \infty$) $H^1(F_v, U_m^{der}) \to H^1(F_v, U_m)$ are both injective (see \cite{Kne69}), consideration of the long exact sequence in Galois cohomology shows that the reduced norm $\det : U_m \to U_1^\ast$  induces a surjection $U_m(F) \to U_1^\ast(F)$. We deduce that $f$ factors through the map
	\[ \det : U_m(F) \backslash U_m(\bA_F^\infty) / V \to U^\ast_1(F) \backslash U^\ast_1(\bA_F^\infty) / \det V. \]
	 Let $G_V = U^\ast_1(F) \backslash U^\ast_1(\bA_F^\infty) / \det V$. Then $G_V$ is a finite abelian group. Let $f' \in \overline{k}[G_V] \cdot f \subset S(V, k)_\ffrm \otimes_k \overline{k}$ be a non-zero vector which spans a simple $\overline{k}[G_V]$-module. Thus there exists a character $\psi : G_V \to \overline{k}^\times$ such that for any $g \in U_m(\bA_F^\infty)$, $g \cdot f' = \psi(\det(g)) f'$. 
	 
	 Let $v = w w^c$ be a place of $F$, split in $E$, such that $v \not\in T$, let $1 \leq i \leq m$, and let $g_{w, i} = \diag(\varpi_w, \dots, \varpi_w, 1, \dots, 1)  \in \GL_m(E_w)$ (where there are $i$ occurrences of $\varpi_w$). Then we have
	 \[ \begin{split} T_w^i f' & = \operatorname{vol}[ \GL_m(\cO_{E_w}) g_{w, i}\GL_m(\cO_{E_w})] \psi(\varpi_w)^i f' \\
& = | 	 \GL_m(\cO_{E_w}) /\big(g_{w, i} \GL_m(\cO_{E_w}) g_{w, i}^{-1} \cap \GL_m(\cO_{E_w})\big)| \psi(\varpi_w)^i f'  \\ 
	 & = q_w^{i(m-i)}\frac{| \GL_m(k(w)) |}{|\GL_i(k(w)) \times \GL_{m-i}(k(w)) |} \psi(\varpi_w)^i f'. \end{split} \]
	 A calculation shows that, if $\alpha_w^i$ denotes the eigenvalue of $T_w^i$ on $f'$, then
	 \[ \prod_{i=1}^m(X - \psi(\varpi_w) q_w^{i-1}) = \sum_{i=0}^m (-1)^i X^{m-i} q^{i(i-1)/2} \alpha_w^i \]
	 (this is essentially the $q$-binomial theorem). It follows from the Chebotarev density theorem that if $\chi : G_E \to \overline{k}^\times$ is the character defined by $\chi \circ \Art_E(x) = \psi(x / x^c)$, then $\overline{r}_\ffrm \cong \chi \otimes( 1 \oplus \epsilon^{-1} \oplus \dots \oplus \epsilon^{1-m})$ (note that the places of $E$ split over $F$ have Dirichlet density 1). This completes the proof.
\end{proof}
If $v \in \Sigma$, then we have fixed an isomorphism $\iota_{\wv} : U_m(F_v) \to \GL_2(D_\wv)$. We set $\mathfrak{K}_v = \iota_\wv^{-1} \GL_2(\cO_{D_\wv})$, and $\mathfrak{I}_v = \iota_\wv^{-1} \mathfrak{I}$, where $\mathfrak{I} \subset \GL_2(\cO_{D_\wv})$ is the standard Iwahori subgroup considered in \S \ref{sec_jacquet_langlands}. We set $\eta_v = \iota_\wv^{-1} \diag(1, \varpi_{D_\wv})$. We write $T_v^1$, $T_v^2 \in \cH(U_m(F_v), \mathfrak{K}_v)$ for the pre-images under $\iota_\wv$ of the operators $T_1, T_2$ which appear in the statement of Lemma \ref{lem_ramified_Hecke_operators}.
\begin{proposition}\label{prop_iharas_lemma}
	Let $v_0 \in \Sigma$, and let $V = \prod_v V_v$ be an open compact subgroup of $U_m(\bA_F^\infty)$ such that $V_{v_0} = \mathfrak{K}_{v_0}$. Let $V' = V^{v_0} \mathfrak{I}_{v_0}$. We define a map
	\[ d : S(V, \cO) \oplus S(V, \cO) \to S(V', \cO) \]
	by the formula $(f, g) \mapsto f + \eta_{v_0} \cdot g$ (action of $\eta_{v_0}$ defined via the inclusion $S(V, \cO) \subset \cA_{l, M_l, \cO}$, cf. Lemma \ref{lem_characterizing_Steinberg}). Then:
	\begin{enumerate}
		\item $d$ is a homomorphism of $\bT^T(V, \cO)$-modules.
		\item Suppose that $V$ is sufficiently small, and that $\ffrm \subset \bT^T(V, \cO)$ is a maximal ideal such that $\overline{r}_\ffrm$ is not isomorphic to a twist of $1 \oplus \epsilon^{-1} \oplus \dots \oplus \epsilon^{1-m}$. Then the induced homomorphism 
		\[ d_\ffrm : S(V, \cO)_\ffrm \oplus S(V, \cO)_\ffrm \to S(V', \cO)_\ffrm \]
		is injective and has saturated image (i.e. its cokernel is torsion-free).
		\item Suppose that $V$ is sufficiently small, and that $\ffrm \subset \bT^T(V, \cO)$ is a maximal ideal such that $\overline{r}_\ffrm$ is not isomorphic to a twist of $1 \oplus \epsilon^{-1} \oplus \dots \oplus \epsilon^{1-m}$. Suppose there exists $f \in S(V, k)_\ffrm - \{  0 \}$ be such that 
		\[ [ (T^1_{v_0})^2 - T^2_{v_0} (q_{v_0}^n + 1)^2 ] f = 0. \] Then $d_\ffrm$ is not an isomorphism.
	\end{enumerate}
\end{proposition}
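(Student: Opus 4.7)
Statement (1) is immediate: the operators in $\bT^T(V, \cO)$ are Hecke operators at places $v \notin T$, while $\alpha$ (the level-lowering inclusion $S(V, \cO) \hookrightarrow S(V', \cO)$) and $\beta$ (translation by $\eta_{v_0}$) only act at $v_0 \in T$; the two types of actions commute, so $d$ is $\bT^T$-equivariant.

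For (2), the plan is to first prove injectivity of $d \otimes_\cO k$ on the $\ffrm$-component. If $(\bar f, \bar g) \in \ker(d \otimes k)_\ffrm$, the relation $\bar f = -\eta_{v_0}\cdot \bar g$ shows that $\bar f$ is invariant under both $\mathfrak{K}_{v_0}$ and $\eta_{v_0}\mathfrak{K}_{v_0}\eta_{v_0}^{-1}$; Lemma \ref{lem_kernel_of_d} then says these generate a subgroup containing $\iota_{\wv_0}^{-1}(\SL_2(D_{\wv_0})) = U_m^{\mathrm{der}}(F_{v_0})$. Invoking Lemma \ref{lem_invariant_implies_non-generic} together with the hypothesis on $\overline{r}_\ffrm$ then forces $\bar f = \bar g = 0$. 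Sufficient smallness of $V$ (via Lemma \ref{lem_sufficiently_small}) makes $S(V, \cO)_\ffrm$ and $S(V', \cO)_\ffrm$ finite free $\cO$-modules, so the Tor long exact sequence associated with
\[ 0 \to S(V, \cO)_\ffrm^2 \xrightarrow{d_\ffrm} S(V', \cO)_\ffrm \to \mathrm{coker}(d_\ffrm) \to 0 \]
will promote injectivity mod $\lambda$ to injectivity over $\cO$ together with $\Tor^\cO_1(\mathrm{coker}(d_\ffrm), k) = 0$, i.e.\ saturation of the image.

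For (3), the plan is to compare $d$ with an ``adjoint'' $d^\ast = (\alpha^\ast, \beta^\ast) : S(V', \cO)_\ffrm \to S(V, \cO)_\ffrm^2$ built from trace maps with respect to the coset decomposition $\mathfrak{K}_{v_0}/\mathfrak{I}_{v_0}$ and its $\eta_{v_0}$-conjugate. A direct coset computation will yield
\[ \alpha^\ast \alpha = q_{v_0}^n + 1 = \beta^\ast \beta, \qquad \alpha^\ast \beta = T^1_{v_0}, \qquad \beta^\ast \alpha = T^1_{v_0}(T^2_{v_0})^{-1}, \]
where the last formula uses that $T^2_{v_0}$, being the central Hecke operator corresponding to $\diag(\varpi_{D_{\wv_0}}, \varpi_{D_{\wv_0}})$, acts invertibly on the $\ffrm$-localisation. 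Viewing $d^\ast d$ as a $2 \times 2$ matrix whose entries commute (lying in a commutative subalgebra of the Hecke algebra), its determinant equals $-(T^2_{v_0})^{-1}\bigl[(T^1_{v_0})^2 - T^2_{v_0}(q_{v_0}^n + 1)^2\bigr]$, which by hypothesis annihilates the non-zero $f \in S(V, k)_\ffrm$. Thus $d^\ast d$ fails to be invertible on $S(V, k)_\ffrm^2$. Using the natural perfect pairing on $S(V, \cO)_\ffrm$ coming from self-duality of the coefficient lattice $M_l$ (available since $U_m(F \otimes_\bQ \bR)$ is compact), $d^\ast$ is genuinely the adjoint of $d$, whence ``$d_\ffrm$ is an isomorphism'' is equivalent to ``$d^\ast d$ is invertible'', contradicting the previous non-invertibility. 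Hence $d_\ffrm$ is not an isomorphism.

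The main technical obstacle I anticipate is justifying the adjointness claim used at the end of (3): one must choose the pairing on $M_l$ to be compatible with the $U_m(F \otimes_\bQ \bR)$-invariant Hermitian form on $W_\infty$, check that the induced pairing on $S(V, \cO)_\ffrm$ remains perfect after $\ffrm$-localisation (equivalently, that $\ffrm$ is stable under the duality involution on $\bT^T(V, \cO)$, which is automatic in the conjugate-self-dual setup), and pin down the exact definition of $\beta^\ast$ so that the identities above hold on the nose.
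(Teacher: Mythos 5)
Your treatments of (1) and (2) match the paper's: for (1) the key is that $v_0 \in T$, and for (2) the kernel of $d \otimes k$ is shown to consist of forms invariant under $U_m^{\mathrm{der}}(F_{v_0})$ via Lemma \ref{lem_kernel_of_d}, which combined with Lemma \ref{lem_invariant_implies_non-generic} and the hypothesis on $\overline{r}_\ffrm$ gives injectivity; saturation then follows from Lemma \ref{lem_sufficiently_small} and the standard Tor argument, as you say.

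For (3), your strategy is the right one but the duality setup is not quite correct, and you do correctly flag it as the weak point. The paper does not use a self-pairing on $S(V, \cO)_\ffrm$ coming from self-duality of $M_l$. It instead introduces a second space $S^\vee(V, R)$ built from the coefficient lattice $M_l$ (rather than $M_l^\vee$) and a pairing $\langle \cdot, \cdot\rangle_V : S(V, R) \times S^\vee(V, R) \to R$; the adjoint of $[VgV]$ is $[Vg^{-1}V]$, and the pairing is perfect after localising at $\ffrm$ on one side and at a \emph{different} maximal ideal $\ffrm^\vee \subset \bT^{T, \vee}(V, \cO)$ with $\overline{r}_{\ffrm^\vee} \cong \overline{r}_\ffrm^c$ on the other. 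So the claim that "$\ffrm$ is stable under the duality involution" is not what happens; the ideal changes to $\ffrm^\vee$, and one needs to observe that the non-degeneracy hypothesis on $\overline{r}_\ffrm$ is preserved under passing to $\overline{r}_\ffrm^c$ so that part (2) also applies to $d_{\ffrm^\vee, k}$, giving surjectivity of its adjoint $d^\vee_{\ffrm^\vee, k} : S(V', k)_\ffrm \to S(V, k)_\ffrm^2$. Finally, the step "whence `$d_\ffrm$ is an isomorphism' is equivalent to `$d^\ast d$ is invertible'" is too quick as stated: over $k$ the pairing can degenerate on $\mathrm{im}(d)$, so $d^\ast d$ invertible does not by itself force $d$ bijective. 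The paper avoids this by a dimension count: $d_{\ffrm,k}$ is injective, so it is an isomorphism iff $\dim S(V, k)_\ffrm^2 = \dim S(V', k)_\ffrm$; if that holds then $d_{\ffrm^\vee, k}$ is also an isomorphism (same dimension statement via perfectness of the pairing), hence so is $d^\vee_{\ffrm^\vee, k} \circ d_{\ffrm, k}$, contradicting the singularity of the Hecke matrix on the $f$-eigenspace. Your formulas for the $2\times 2$ Hecke matrix and its determinant do agree with the paper's.
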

\begin{proof}
	The first point is immediate, because $v_0 \in T$. For the second it is enough (using Lemma \ref{lem_sufficiently_small}) to show that the map
	\[ d_{\ffrm, k} : S(V, k)_\ffrm \oplus S(V, k)_\ffrm \to S(V', k)_\ffrm \]
	given by the same formula $d_{\ffrm, k}(f, g) = f + \eta_{v_0} \cdot g$ is injective. By Lemma \ref{lem_kernel_of_d}, the kernel of $d_{\ffrm, k}$ is contained in the subspace of $\cA_{l, M_l, k}$ where $U_m^{der}(F_{v_0})$ acts trivially. By Lemma \ref{lem_invariant_implies_non-generic} and our hypothesis on $\ffrm$, the intersection of this subspace with $S(V, k)_\ffrm$ is zero, and $d_{\ffrm, k}$ is indeed injective.
	
	We now prove the third point. It is enough to show that $d_{\ffrm, k}$ is not an isomorphism. If $R$ is an $\cO$-algebra, let $S^\vee(V, R)$ denote the space of modular forms defined in the same way as $S(V, R)$, but with $M_l$ replaced by $M_l^\vee$. (Thus the elements of $S^\vee(V, R)$ are functions $\varphi : U_m(F) \backslash U_m(\bA_F) / U_m(F \otimes_\bQ \bR) \to M_l \otimes_\cO R$.) We can define an $R$-linear pairing 
	\[ \begin{split} \langle \cdot, \cdot \rangle_V& :  S(V, R) \times S^\vee(V, R) \to R \\
	\langle \varphi, \varphi' \rangle_V & = \sum_{g \in U_m(F) \backslash U_m(\bA_F^\infty) / V} \langle \varphi(g), \varphi'(g) \rangle. \end{split} \]
	The adjoint of a Hecke operator $[V g V]$ acting on $S(V, R)$ with respect to this pairing is $[V g^{-1} V]$. Let $\bT^{T, \vee}(V, \cO)$ denote the $\cO$-subalgebra of $\End_\cO(S^\vee(U, \cO))$ generated by the Hecke operators away from $T$. It follows that there is a maximal ideal $\ffrm^\vee \subset \bT^{T, \vee}(V, \cO)$ such that $\langle \cdot, \cdot \rangle_V$ restricts to a perfect $\cO$-linear pairing
	\[ \langle \cdot, \cdot \rangle_{V, \ffrm} : S(V, \cO)_\ffrm \times S^\vee(V, \cO)_{\ffrm^\vee} \to \cO, \]
	and moreover $\overline{r}_{\ffrm^\vee} \cong \overline{r}_\ffrm^c$.
	Applying Lemma \ref{lem_sufficiently_small}, this pairing in turn gives a perfect $k$-linear pairing
	\[  \langle \cdot, \cdot \rangle_{V, \ffrm, k} : S(V, k)_\ffrm \times S^\vee(V, k)_{\ffrm^\vee} \to k. \]
	We can consider the map
	\[ d_{\ffrm^\vee, k} : S^\vee(V, k)_{\ffrm^\vee} \oplus S^\vee(V, k)_{\ffrm^\vee} \to S^\vee(V', k)_{\ffrm^\vee}. \]
	Its adjoint, computed with respect to the pairings $\langle \cdot, \cdot \rangle_{V, \ffrm, k}$ and $\langle \cdot, \cdot \rangle_{V', \ffrm, k}$, is a map
	\[ d_{\ffrm^\vee, k}^\vee : S(V', k)_\ffrm \to S(V, k)_\ffrm \oplus S(V, k)_\ffrm. \]
	The second part of the proposition applies equally well to $d_{\ffrm^\vee, k}$, showing that $d^\vee_{\ffrm^\vee, k}$ is surjective. We see that $d_{\ffrm, k}$ is an isomorphism exactly when its source and target have the same dimension, which happens only if $d_{\ffrm^\vee, k}^\vee \circ d_{\ffrm, k}$ is an isomorphism. However, a computation (essentially the same one appearing in \cite[Lemma 2]{Tay89}) shows that $d_{\ffrm^\vee, k}^\vee \circ d_{\ffrm, k}$ equals the matrix of Hecke operators
	\[ d_{\ffrm^\vee, k}^\vee \circ d_{\ffrm, k} = \left( \begin{array}{cc} q_{v_0}^n + 1 & T^1_{v_0} \\ T^1_{v_0} (T^2_{v_0})^{-1} & q_{v_0}^n + 1 \end{array}\right).  \]
	 It follows that the determinant of $d_{\ffrm^\vee, k}^\vee \circ d_{\ffrm, k}$ as a $k$-linear endomorphism of $S(V, k)_\ffrm^2$ equals the determinant of $(q^n_{v_0} + 1)^2 - (T_{v_0}^1)^2 (T_{v_0}^2)^{-1}$ as a $k$-linear endomorphism of $S(V, k)_\ffrm$. This completes the proof.
\end{proof}
We can now give the proof of Theorem \ref{thm_level_raising_for_unitary_groups}. Let $\sigma$ be an automorphic representation of $U_m(\bA_F)$, and fix $v_0 \in \Sigma$ such that $\sigma_{v_0} \circ \iota^{-1}_{\wv_0} \cong \nInd_{P_0}^{\GL_2(D_{\wv_0})} \chi_1 \otimes \chi_2$, where $\chi_1, \chi_2 : E_{\wv_0}^\times \to \bC^\times$ are unramified characters such that $\iota^{-1}(\chi_1(\varpi_{\wv_0}) / \chi_2(\varpi_{\wv_0})) \equiv q_{\wv_0}^{n} \text{ mod }\ffrm_{\overline{\bbZ}_l}$. Then $\sigma_{v_0}^{\mathfrak{K}_{v_0}} \neq 0$. Let $V = \prod_v V_v \subset U_m(\bA_F^\infty)$ be an open compact subgroup satisfying the following conditions:
\begin{itemize}
	\item $(\sigma^\infty)^V \neq 0$.
	\item $V_l \subset V_{l, 0}$.
	\item $V_{v_0} = \mathfrak{K}_{v_0}$.
	\item If $v \not\in \Sigma \cup S_l$ is a finite inert place of $F$ such that $\sigma_v$ is unramified, then $V_v$ is a hyperspecial maximal compact subgroup of $U_m(F_v)$ such that $\sigma_v^{V_v} \neq 0$.
	\item $V$ is sufficiently small. 
\end{itemize}
	Then there is a natural inclusion $\iota^{-1}(\sigma^\infty)^V \subset \cA_{l, W_l}^V$. Let $T$ be a finite set of places such that the Hecke algebra $\bT^T(V, \cO)$ is defined, let $\mathfrak{p} \subset \bT^T(V, \cO)$ be the kernel of the natural homomorphism $\bT^T(V, \cO) \to \End_{\overline{\bQ}_l}(\iota^{-1} (\sigma^\infty)^V)$, and let $\ffrm \subset \bT^T(V, \cO)$ be the unique maximal ideal containing $\mathfrak{p}$. Let $V' \subset U_m(\bA_F^\infty)$ be the group associated to $V$ as in the statement of Proposition \ref{prop_iharas_lemma}. Proposition \ref{prop_iharas_lemma} implies that the map 
	\[ d_\ffrm : S(V, \cO)_\ffrm \oplus S(V, \cO)_\ffrm \to S(V', \cO)_\ffrm \]
	 is not an isomorphism. Indeed, the eigenvalue of $(T^1_{v_0})^2 - T^2_{v_0} (q^n_{v_0} + 1)^2$ on $\sigma^{V_{v_0}}_{v_0}$ is, by Lemma \ref{lem_ramified_Hecke_operators}, an element of $\ffrm_{\overline{\bZ}_l}$, yet this eigenvalue is also a root of the characteristic polynomial of this Hecke operator acting on $S(V, \cO)_\ffrm$, so $(T^1_{v_0})^2 - T^2_{v_0} (q^n_{v_0} + 1)^2$ must have a non-trivial kernel in $S(V, k)_\ffrm$. The second part of the proposition shows that $d_\ffrm$ is injective with saturated image, so it follows that the induced map
	\[ (  S(V, \cO)_\ffrm \oplus S(V, \cO)_\ffrm ) \otimes_\cO \overline{\bQ}_l \to S(V', \cO)_\ffrm \otimes_\cO \overline{\bQ}_l \]
	is not surjective. Since $\cA_{l, W_l}$ is a semisimple $\overline{\bQ}_l[U_m(\bA_F^\infty)]$-module, this implies that there is an automorphic representation $\sigma'$ of $U_m(\bA_F^\infty)$ with the following properties:
	\begin{itemize}
		\item $(\sigma^{\prime, \infty})^{V'} \neq 0$.
		\item $\overline{r}_\iota(\sigma') \cong \overline{r}_\ffrm \cong \overline{r}_\iota(\sigma)$.
		\item $(\sigma'_{v_0})^{\mathfrak{I}_{v_0}} \neq 0$, yet the map $d_{\sigma_{v_0}} : (\sigma'_{v_0})^{\mathfrak{K}_{v_0}} \oplus (\sigma'_{v_0})^{\mathfrak{K}_{v_0}} \to (\sigma'_{v_0})^{\mathfrak{I}_{v_0}}$ is not surjective. 
	\end{itemize}
	Lemma \ref{lem_characterizing_Steinberg} implies that $\sigma'_{v_0}$ is a twist of $\St_{\GL_2(D_{\wv_0})}$ by an unramified character. This completes the proof, except we still need to explain why $\mathrm{BC}(\sigma')$ can be chosen to be $\iota$-ordinary if $\mathrm{BC}(\sigma)$ is. This can be achieved by enlarging the Hecke algebra $\bT^T(V, \cO)$ to include the Hecke operators $U_{\boldsymbol{\lambda}, v}^j$ for $v \in S_l$ (cf. \cite[\S 2.4]{Clo14} or \cite[\S 2.4]{ger}); indeed, the ordinary subspace of $S(V, \cO)$ can be defined as the maximal direct summand $\cO$-module where each of these operators acts invertibly. We omit the routine modifications required to the proof.
\section{Congruences between automorphic forms -- general linear group case}\label{sec_congruences_general_linear_case}

We can now prove the main theorem of this paper. 
\begin{theorem}\label{thm_level_raising_for_general_linear_groups_two_factor_case}
	Let $n \geq 1$ be an integer and let $E$ be a CM number field. Let $F$ be the maximal totally real subfield of $E$, and assume that $E / F$ is everywhere unramified. Let $l$ be a prime, and fix an isomorphism $\iota : \overline{\bQ}_l \to \bC$. Let $w_0$ be a prime-to-$l$ place of $E$ which splits over $F$. Let $\pi_1, \pi_2$ be cuspidal, conjugate self-dual automorphic representations of $\GL_n(\bA_{E})$ satisfying the following conditions:
	\begin{enumerate}
		\item $\pi = \pi_1 \boxplus \pi_2$ is regular algebraic. 
		\item For any place $w$ of $E$, if $\pi_w$ is ramified then $w$ is split over $F$. The $l$-adic places of $F$ all split in $E$.
		\item There are isomorphisms $\pi_{i, w_0} \cong \St_n(\xi_{i})$ for some unramified characters $\xi_{i} : E_{w_0}^\times \to \bC^\times$ \textup{(}$i = 1, 2$\textup{)}, and 
		\[ \iota^{-1} \xi_{1}(\varpi_{w_0}) / \xi_{2}(\varpi_{w_0}) \equiv q_{w_0}^{n} \text{ mod }\ffrm_{\overline{\bZ}_l}. \]
		\item $\overline{r}_\iota(\pi)$ is not isomorphic to a twist of $1 \oplus \epsilon^{-1} \oplus \dots \oplus \epsilon^{1-2n}$.
	\end{enumerate}
Let $F' / F$ be any totally real, quadratic $w_0$-split extension, and let $E' = E F'$. Suppose further that $\overline{r}_\iota(\pi)|_{G_{E'}}$ is not isomorphic to a twist of $1 \oplus \epsilon^{-1} \oplus \dots \oplus \epsilon^{1-2n}$. Then there exists a RACSDC automorphic representation $\Pi$ of $\GL_{2 n}(\bA_{E'})$ and a place $w_0' | w_0$ of $E'$ satisfying the following conditions:
	\begin{enumerate}
		\item There is an isomorphism $\overline{r}_\iota(\Pi) \cong \overline{r}_\iota(\pi)|_{G_{E'}}$.
		\item There is an isomorphism $\Pi_{w'_0} \cong \St_{2n}(\xi')$, where $\xi' : E_{w'_0}^\times \to \bC^\times$ is an unramified character. 
		\item For each archimedean place $v'$ of $E$ lying above a place $v$ of $E$, $\Pi_{v'}$ and $\pi_{v}$ have the same infinitesimal character. If $\pi$ is $\iota$-ordinary, then $\Pi$ is $\iota$-ordinary. 
		\item For any place $w$ of $E'$, if $\Pi_w$ is ramified then $w$ is split over $F'$.
	\end{enumerate}
\end{theorem}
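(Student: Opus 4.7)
The plan is to translate the level-raising problem for $\GL_{2n}(\bA_{E'})$ into a level-raising problem on a definite unitary group $U_{2n}$ over $F'$, where Theorem \ref{thm_level_raising_for_unitary_groups} applies, and then transfer back via the endoscopic classification results of \S \ref{sec_endoscopic_classification}. Concretely, I would set $E' = F'E$, let $v_0'$ be the place of $F'$ below $w_0'$, and take $\Sigma = \{v_0'\}$. Since the assumption that $E/F$ is everywhere unramified forces $[F:\bQ]$ (hence $[F':\bQ]$) to be even, we can choose a division algebra $D/E'$ of rank $n$ with invariant $1/n$ precisely at the places of $E'$ above $v_0'$ (and its complex conjugate), together with an involution of the second kind on $B = M_2(D)$ making $U_{2n}$ satisfy the conditions of \S \ref{sec_endoscopic_classification_setup}. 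All places where $\pi_1$ or $\pi_2$ ramifies are split in $E/F$ by hypothesis, hence split in $E'/F'$, and $l$-adic places split as well, so the set-up of \S \ref{sec_endoscopic_classification} is in force.

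The descent step goes as follows. Form $\pi_{E'} = \pi_{1,E'} \boxplus \pi_{2,E'}$ by cyclic base change; this is regular algebraic, conjugate self-dual, unramified at all inert places of $E'/F'$, and Steinberg at $w_0'$ because $w_0$ splits in $E'/E$. We then invoke Theorem \ref{thm_unitary_group_descent} to produce a multiplicity-one automorphic representation $\sigma$ of $U_{2n}(\bA_{F'})$ with $\mathrm{BC}(\sigma) = \pi_{E'}$. By Lemma \ref{lem_computation_of_LJ}(2),
\[ \sigma_{v_0'} \circ \iota_{w_0'}^{-1} \cong \nInd_{P_0}^{\GL_2(D_{w_0'})} \xi_1' \circ N \otimes \xi_2' \circ N, \]
where $\xi_i'$ is the local base change of $\xi_i$; in particular $\xi_i'(\varpi_{w_0'}) = \xi_i(\varpi_{w_0})$ and $q_{w_0'} = q_{w_0}$, so the congruence hypothesis (3) of Theorem \ref{thm_level_raising_for_general_linear_groups_two_factor_case} transfers verbatim. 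Moreover $\overline{r}_\iota(\sigma) \cong \overline{r}_\iota(\pi)|_{G_{E'}}$ by Corollary \ref{cor_existence_of_gal_over_Q_l_for_unitary_group}, which by assumption is not a twist of $1 \oplus \epsilon^{-1} \oplus \dots \oplus \epsilon^{1-2n}$. Thus the hypotheses of Theorem \ref{thm_level_raising_for_unitary_groups} are satisfied and produce an automorphic representation $\sigma'$ of $U_{2n}(\bA_{F'})$ with $\overline{r}_\iota(\sigma') \cong \overline{r}_\iota(\sigma)$, an unramified-twisted Steinberg component $\sigma'_{v_0'} \circ \iota_{w_0'}^{-1} \cong \St_{\GL_2(D_{w_0'})}(\chi)$, the same archimedean component, ordinarity preserved, and unramifiedness at inert places preserved.

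Finally, apply Theorem \ref{thm_unitary_group_base_change} to $\sigma'$: we obtain a partition $2n = m_1 + \dots + m_k$ and conjugate self-dual discrete automorphic representations $\tau_i$ of $\GL_{m_i}(\bA_{E'})$ with $\mathrm{BC}(\sigma') = \tau_1 \boxplus \dots \boxplus \tau_k$. At $w_0'$ we have $|LJ_{\GL_2(D_{w_0'})}|(\mathrm{BC}(\sigma')_{w_0'}) \cong \St_{\GL_2(D_{w_0'})}(\chi)$, and Lemma \ref{lem_computation_of_LJ}(3) forces $\mathrm{BC}(\sigma')_{w_0'}$ to be either $\St_{2n}(\chi)$ or the character $\chi \circ \det$. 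The latter would make $\overline{r}_\iota(\mathrm{BC}(\sigma'))$ a twist of $1 \oplus \epsilon^{-1} \oplus \dots \oplus \epsilon^{1-2n}$, contradicting the hypothesis on $\overline{r}_\iota(\pi)|_{G_{E'}}$, so $\mathrm{BC}(\sigma')_{w_0'} = \St_{2n}(\chi)$. Because $\St_{2n}(\chi)$ is an irreducible square-integrable local representation that cannot arise as a non-trivial normalized induction nor as a local component of a non-cuspidal discrete (Speh) representation of $\GL_{2n}(\bA_{E'})$, we conclude $k = 1$, $m_1 = 2n$, and $\tau_1$ is cuspidal. Setting $\Pi = \tau_1$ yields the desired RACSDC representation; the remaining properties on the Galois representation, infinitesimal character, ordinarity, and ramification at split places are inherited from $\sigma'$ via the matching conditions in the definition of base change.

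The main obstacle I expect is the cuspidality step at the end: ruling out the possibility $\mathrm{BC}(\sigma')_{w_0'} = \chi \circ \det$ (via the residual Galois hypothesis restricted to $G_{E'}$, for which the assumption is explicitly built into the theorem) and ruling out non-cuspidal discrete summands by using the incompatibility of Speh local components with $\St_{2n}(\chi)$. A more technical point is ensuring the hypotheses of the descent Theorem \ref{thm_unitary_group_descent} apply even when $\pi_{1,E'}$ or $\pi_{2,E'}$ fails to remain cuspidal after base change to $E'$, which requires a mild extension of the stated formulation or an appeal to the same trace-formula comparison of \S \ref{sec_endoscopic_classification} applied with more summands on the $\GL$ side.
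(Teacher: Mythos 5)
Your overall strategy is exactly the paper's: descend to the definite unitary group with $\GL_2(D)$ at the distinguished place, raise the level there via Theorem \ref{thm_level_raising_for_unitary_groups}, return to $\GL_{2n}$ via Theorem \ref{thm_unitary_group_base_change}, and use Lemma \ref{lem_computation_of_LJ}(3) together with the residual Galois hypothesis over $E'$ to force $\Pi_{w_0'}$ to be an unramified twist of Steinberg and $\Pi$ to be cuspidal. That part of your argument, including the transfer of the congruence at $w_0'$ (unchanged because $w_0$ splits in $E'$) and the final cuspidality step, matches the paper.

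There is, however, a concrete misstep in how you invoke the descent. You set the group up over $F'$, with $\Sigma=\{v_0'\}$ and $D$ a division algebra over $E'$, and then ask Theorem \ref{thm_unitary_group_descent} to produce $\sigma$ on $U_{2n}(\bA_{F'})$ with $\mathrm{BC}(\sigma)=\pi_{E'}$. The theorem does not do this: its hypotheses are on \emph{cuspidal} $\pi_1,\pi_2$ over $E$ with the group defined over $F$, and its conclusion lives over a further $\Sigma$-split quadratic extension of the base field --- the quadratic extension is not decorative, it is what forces the sign in the multiplicity formula to become a square and hence $1$. Applied with base field $F'$ and inputs $\pi_{1,E'},\pi_{2,E'}$, it would yield a representation over some quadratic extension $F''/F'$, not over $F'$; and as you yourself note, the inputs $\pi_{i,E'}$ need not even be cuspidal, so the trace-formula comparison of \S\ref{sec_endoscopic_classification} would have to be redone with extra summands. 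The fix is the paper's setup: take $v_0$ the place of $F$ below $w_0$, $\Sigma=\{v_0\}$, $D$ over $E$, $U_{2n}$ over $F$, and apply Theorem \ref{thm_unitary_group_descent} with the given $w_0$-split quadratic extension $F'/F$. This produces $\sigma$ on $U_{2n}(\bA_{F'})$ with $\mathrm{BC}(\sigma)=\pi_{E'}$ directly from the cuspidality of $\pi_1,\pi_2$ over $E$, so the issue you flag about cuspidality after base change never arises; the rest of your argument then goes through as written (with $\Sigma'$ the set of places of $F'$ above $v_0$ when applying Theorem \ref{thm_level_raising_for_unitary_groups}).
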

\begin{proof}
	Let $v_0$ denote the place of $F$ lying below $w_0$ and let $\Sigma = \{ v_0\}$. Let $m = 2n$ and choose a unitary group $U_{m}$ as in \S \ref{sec_endoscopic_classification_setup}. Theorem \ref{thm_unitary_group_descent} implies the existence of an automorphic representation $\sigma$ of $U_m(\bA_{F'})$ such that for each inert place $v$ of $F'$, $\sigma_v$ is unramified, and $\operatorname{BC}(\sigma) = \pi_{E'}$. This automorphic representation satisfies the hypotheses of Theorem \ref{thm_level_raising_for_unitary_groups}, with $\Sigma'$ equal to the set of places of $F'$ lying above $\Sigma$. Using this theorem we can find an automorphic representation $\sigma'$ of $U_m(\bA_{F'})$ with the following properties:
	\begin{itemize}
		\item $\overline{r}_\iota(\sigma') \cong \overline{r}_\iota(\pi)|_{G_{E'}}$.
		\item There exists $v_0' \in \Sigma'$ and an isomorphism $\sigma'_{v'_0}\circ \iota_{w'_0}^{-1} \cong \St_{\GL_2(D_{w'_0})}(\xi')$, where $\xi' : E'_{w'_0} \to \bC^\times$ is an unramified character and $w_0'$ is the unique place of $E'$ lying above both $v'_0$ and $w_0$.
		\item $\sigma_\infty \cong \sigma'_\infty$. $\mathrm{BC}(\sigma')$ is $\iota$-ordinary if $\mathrm{BC}(\sigma)$ is. 
		\item For each place finite place $v$ of $F'$ which is inert in $E'$, $\sigma'_v$ is unramified.
	\end{itemize}
	Theorem \ref{thm_unitary_group_base_change} implies the existence of a regular algebraic automorphic representation $\Pi$ of $\GL_m(\bA_{E'})$ such that $\operatorname{BC}(\sigma') = \Pi$. To complete the proof, it remains only to justify why $\Pi_{w_0'}$ is an unramified twist of the Steinberg representation (then $\Pi$ is necessarily cuspidal, and satisfies all of the other requirements). We have $| LJ_{\GL_2(D_{w'_0})}|(\Pi_{w'_0}) = \sigma'_{v'_0} \circ \iota_{w_0'}$, an unramified twist of the Steinberg representation of $\GL_2(D_{w'_0})$. 
	
	By the third part of Lemma \ref{lem_computation_of_LJ}, $\Pi_{w'_0}$ is (up to unramified twist) either the Steinberg representation or the trivial representation. If $\Pi_{w'_0}$ is the trivial representation then the classification of the discrete spectrum of $\GL_m(\bA_{E'})$ (\cite{Moe89}) implies that $\Pi$ is itself 1-dimensional, contradicting our assumption on $\overline{r}_\iota(\Pi)$. This completes the proof.  
\end{proof}

\bibliographystyle{amsalpha}
\bibliography{CMpatching}

\end{document}